\newtheorem*{theorem*}{Theorem}
\newtheorem{theorem}{Theorem}[section]
\newtheorem{proposition}[theorem]{Proposition}
\newtheorem{lemma}[theorem]{Lemma}
\theoremstyle{definition}
\newtheorem{definition}[theorem]{Definition}
\newtheorem{remark}[theorem]{Remark}
\newtheorem{example}[theorem]{Example}
\newcommand{\R}{\mathcal{R}}
\newcommand{\C}{\mathcal{C}}
\newcommand{\A}{\mathcal{A}}
\newcommand{\G}{\Gamma}
\newcommand{\w}{\mathbf{w}}
\newcommand{\e}{\varepsilon}
\renewcommand{\H}{\operatorname{Heap}}
\newcommand{\Wfc}{W^{FC}}
\newcommand{\braidr}[2]{[#1\cdots]_{#2}}
\newcommand{\braidl}[2]{[\cdots #1]_{#2}}
\newcommand{\TL}{\operatorname{TL}}
\newcommand{\Ared}{\mathcal{A}_{ch}}
\begin{document}

\title[Length of fully commutative elements]{On the length of fully commutative
elements}

\author[Philippe Nadeau]{Philippe Nadeau}
\address{CNRS, Institut Camille Jordan, Universit\'e Claude Bernard Lyon 1,
69622 Villeurbanne Cedex, France}
\email{nadeau@math.univ-lyon1.fr}
\urladdr{http://math.univ-lyon1.fr/{\textasciitilde}nadeau}

\date{\today}

\subjclass[2010]{}
\keywords{Coxeter groups, fully commutative elements, Coxeter length, rational functions, finite automata, periodic sequences, reduced expressions, Temperley--Lieb algebra}

\begin{abstract} In a Coxeter group $W$, an element is \emph{fully commutative} if any two of its reduced expressions can be linked by a series of commutation of adjacent letters. These elements  have particularly nice combinatorial properties, and also index a basis of the generalized Temperley--Lieb algebra attached to $W$. 

We give two results about the sequence counting these elements with respect to their Coxeter length. First we prove that it always satisfies a linear recurrence with constant coefficients, by showing that reduced expressions of fully commutative elements form a regular language. Then we classify those groups $W$ for which the sequence is ultimately periodic, extending a result of Stembridge. These results are applied to the growth of generalized Temperley--Lieb algebras.
\end{abstract}

\maketitle


\section{Introduction and Results}

Coxeter groups are defined by a finite set $S$ of generating involutions which are subject to {braid relations }of the form $st\cdots=ts\cdots$ where both sides have length $m_{st}\geq 2$. These integers are gathered in a  Coxeter matrix $M$ indexed by $S$. This abstract presentation hides the fact that Coxeter groups occur naturally in a number of algebraic and geometric settings. They arise frequently as generalized reflection groups, and as a matter of fact coincide with groups generated by orthogonal reflections in the finite case. The interest in Coxeter groups also comes  via {Hecke algebras}, which are one-parameter deformations of their group algebras. These Hecke algebras have natural bases indexed by elements of $W$.

Coxeter groups possess rich combinatorics, starting in the finite case which includes the symmetric groups for instance. A lot of the combinatorial structure is encoded in the {\em reduced expressions} an element $w\in W$, which are words $s_1\cdots s_l$ in $S$ representing $w$ such that $w$ can not be written with less than $l$ generators. This integer $l$ is the \emph{length }$\ell(w)$.  Properties of reduced expressions and length form arguably the core of the combinatorics of Coxeter groups, for which the standard reference is Bj\"orner and Brenti's book~\cite{BjorBrenbook}. \medskip

An element of $W$ is called {\em fully commutative} (FC) if any two of its reduced decompositions can be linked by a series of commutation relations of adjacent letters. Note that for a general element $w$, it is known that any two of its reduced decompositions are linked by braid relations (Matsumoto's property). From this point of view, the reduced expressions of a given FC element are thus better behaved. In his seminal work~\cite{St1}, Stembridge shows a number of natural characterizations of FC elements; for instance, their reduced expressions correspond bijectively to linear orderings of a partial order. In the simply laced case, \emph{i.\,e.} when $m_{st}\leq 3$ for all $s,t\in S$, Fan and Stembridge~\cite{FanStem} give further characterizations of FC elements in terms of the root system attached to $W$.

 The Temperley--Lieb algebra is a famous diagram algebra born in statistical mechanics~\cite{TemperleyLieb}, which was later recognized as a quotient of the Hecke algebra of type $A_n$~\cite{JonesAnnals}. This was later used to define generalized Temperley--Lieb algebras for all Coxeter groups as quotients of Hecke algebras, and then prove that FC elements index a basis of these algebras~\cite{Fan,Graham}. We will give applications of our results to the growth of these algebras at the end of this introduction, results that we now proceed to state.
\medskip

We investigate in this work the enumeration of FC elements in a general Coxeter group. Let us first note that there may be a finite number of FC elements even though $W$ is infinite: in fact in his work~\cite{St1}, Stembridge precisely classifies when this happens, and in a subsequent paper~\cite{St2} he enumerates the number of FC elements in all such cases. To extend these results to general Coxeter groups, it is natural to enumerate elements according to their length.

 We write $\Wfc_l$ for the set of all FC elements of length $l$, and let $\Wfc(t)=\sum_{l\geq 0} |\Wfc_l| t^l$ be the corresponding generating function. It was computed in affine type $\widetilde{A}_n$ in~\cite{HanJon}, and this was recently extended to all finite and affine Coxeter groups in~\cite{BJN}.
\smallskip

 Our first contribution deals with the nature of the generating function $\Wfc(t)$. The following result shows that it is essentially in the simplest  possible class.
 
\begin{theorem}[Rationality]
\label{th:rationality}
For any Coxeter group $W$, the series $\Wfc(t)$ is rational.
\end{theorem}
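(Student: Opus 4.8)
The plan is to realize $\Wfc(t)$ as the length generating function of a regular language and then invoke the classical fact that such generating functions are rational. Fix once and for all a total order on the generating set $S$. Since the reduced expressions of a fixed FC element are exactly the linear extensions of its heap poset (Stembridge's characterization recalled above), each FC element $w$ possesses a unique lexicographically minimal reduced word, and distinct FC elements yield distinct words. Writing $\mathcal{L}\subseteq S^{*}$ for the set of all these canonical words, the map sending $w$ to its lexicographically least reduced word is a length-preserving bijection from $\Wfc$ onto $\mathcal{L}$. Consequently $\Wfc(t)=\sum_{\mathbf{w}\in\mathcal{L}}t^{\ell(\mathbf{w})}$, and it suffices to prove that $\mathcal{L}$ is regular.

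Next I would characterize membership in $\mathcal{L}$ by conditions that a finite automaton can verify while scanning a word from left to right. A word $\mathbf{w}$ lies in $\mathcal{L}$ precisely when (i) it is a reduced word, (ii) the element it represents is fully commutative, and (iii) it is the lexicographically least word in its commutation class. Condition (ii) is governed by Stembridge's heap criterion: $\mathbf{w}$ is FC if and only if, for every pair $s,t$ with $3\le m_{st}<\infty$, the heap contains no convex alternating chain $\underbrace{sts\cdots}_{m_{st}}$; condition (i) amounts to forbidding the degenerate convex chains $ss$. The essential feature is that every forbidden configuration has bounded size, at most $N:=\max\{m_{st}: m_{st}<\infty\}$, and likewise condition (iii) is detected by bounded \emph{local} exchanges: a word fails to be lex-least exactly when some letter can be commuted leftward past a block of pairwise commuting letters into a smaller position.

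I would then build the automaton whose states record a suitably truncated \emph{top profile} of the heap constructed so far --- the portion of the heap lying within bounded depth of its maximal elements, which should carry all the information needed to decide whether appending a further generator keeps the word reduced, fully commutative, and lexicographically minimal. Because $S$ is finite and only a bounded neighbourhood of the top is retained, there are finitely many such profiles, so by the Myhill--Nerode criterion $\mathcal{L}$ is regular. Finally, a regular language has a rational length generating function, computed as $\mathbf{u}^{\top}(I-tA)^{-1}\mathbf{v}$ from the transition matrix $A$ of a recognizing automaton, which gives the rationality of $\Wfc(t)$.

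The main obstacle is the third step: proving that a bounded top profile genuinely suffices, i.e.\ that whether $\mathbf{w}s\in\mathcal{L}$ depends only on finitely much data about $\mathbf{w}$. The delicate point is that for a pair $\{s,t\}$ the relevant letters of $\mathbf{w}$ may be spread arbitrarily far apart in the word, so one must argue that, on words that are still candidates for membership in $\mathcal{L}$, each generator-pair chain is already of bounded length and that the interactions created by a new letter reach only within distance $N$ of the top of the heap. Making this truncation precise --- and checking that lex-minimality is preserved under it --- is where the real work lies; once the bounded-memory property is established, regularity and hence rationality follow formally.
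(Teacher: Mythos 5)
Your high-level plan coincides with the paper's: encode each FC element by its lexicographically least reduced word, prove that the resulting language is regular, and conclude by rationality of length generating functions of regular languages. (The paper handles the lex-minimality condition differently and more cheaply: instead of building your condition (iii) into the automaton, it writes $\mathcal{L}(W,S,<)=Shortlex(\Gamma)\cap\R(\Wfc)$, invokes Anisimov--Knuth for the regularity of $Shortlex(\Gamma)$, and uses closure of regular languages under intersection; that part of your sketch is the unproblematic one.) The genuine gap is your third step, which you yourself defer: the state you propose --- the restriction of the heap to elements within bounded depth of its maximal elements --- is provably insufficient. Concretely, in type $A_n$ consider the FC word $\w=s_1s_2\cdots s_n$; its heap is a chain in which the elements labelled $s_1$ and $s_2$ sit at distance $n-1$ and $n-2$ below the unique maximal element. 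Appending $s_1$ yields $s_1s_2\cdots s_ns_1$, whose commutation class contains $s_1s_2s_1s_3\cdots s_n$ (slide the last $s_1$ leftward past $s_3,\ldots,s_n$), hence a braid factor $s_1s_2s_1$; this word must be rejected. For $n$ larger than any fixed truncation depth, the two heap elements responsible are invisible in your profile, so an automaton remembering only that profile would wrongly accept. Your fallback hope that ``each generator-pair chain is already of bounded length'' on admissible words is also false: FC heaps may contain arbitrarily long chains $H_{s,t}$ (for instance the zigzag heaps of type $\widetilde{C}_n$ described in~\cite{BJN}). The forbidden convex chains do have bounded cardinality, as you say, but they can span unbounded depth, which is exactly why no depth-truncated memory suffices.

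This is precisely the difficulty that the paper's Definition~\ref{defi:qe} is designed to circumvent: the state stores $\max(H)$ together with, for each edge $e=\{s,t\}$ with $m_{st}<\infty$, the word $q_e(H)$, the longest alternating suffix of the chain $H_{st}$ that can still be completed to a braid factor \emph{modulo commutations}. This datum has length $<m_{st}$, so the state set is finite, but it is emphatically not a bounded-depth fragment of the heap; the paper stresses that $q_{s,t}(w)$ need not even occur as a suffix of any reduced word of $w$, and its own example $q_{s_1,s_2}(s_1s_2s_3)=s_1s_2$ in type $A_3$ is exactly the configuration above. The real work is then Lemma~\ref{lem:technical}: an induction on the word showing that this invariant is correctly updated by the explicit case-by-case transition rules (in the example, rule~\eqref{it:up6} is what keeps $q_e=s_1s_2$ alive while $s_3,s_4,\ldots$ are read). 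Until you formulate an invariant of this kind and prove such an update lemma, the Myhill--Nerode step has nothing to apply to, and your argument remains a plan rather than a proof.
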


An equivalent formulation is that the sequence $|\Wfc_l|, l=0,1,\ldots$ satisfies a linear recurrence with constant coefficients for $l$ large enough.
\smallskip

There are a number of results studying this enumeration for various subsets of $W$. First and foremost, the length generating function of $W$ itself is rational. This is classically shown by induction using elementary combinatorics of Coxeter groups, see \cite[Section 7.1]{BjorBrenbook} or \cite[Section 5.12]{Humphreys}. A lesser known result is that this is true for the set of reflections of $W$, which are the conjugates of generators $S$ ~\cite{deMan}. Another (simpler) example is that elements possessing a unique reduced expression also have a rational generating function~\cite{cassweb}.
\smallskip

 The proof of Theorem~\ref{th:rationality} is not direct; unlike in the case of $W$, there does not seem to be a simple recursive decomposition of $\Wfc$ which behaves well with respect to length. Instead, we will use the theory of \emph{finite automata}. In Section~\ref{sec:rationality}, we will design an automaton recognizing a language with generating function $\Wfc(t)$, which automatically entails rationality. We start by showing that the set $\R(\Wfc)$ of reduced expressions of elements of $\Wfc$ is recognized by an explicit automaton $\Ared(W)$. The design of this automaton and proof of its correctness form the core of Section~\ref{sec:rationality}. This recalls here also a celebrated result of Brink and Howlett~\cite{BrinkHowlett} (see also~\cite[Section 4.8]{BjorBrenbook}) who show that the set of reduced expressions of {\em all} elements can be recognized by a finite automaton. To go from expressions to elements, we use the lexicographical order to select normal forms for elements of $\Wfc$; Theorem~\ref{th:rationality} follows then   thanks to a result of Anisimov and Knuth.

 The proof we sketched is completely constructive, and allowed us to implement the computation of $\Wfc(t)$ given a Coxeter group $W$.
\bigskip

We now describe our second contribution. As mentioned above, one of the main results of~\cite{St1} is the classification of $W$ such that $\Wfc$ is finite; let us say that $W$ is FC-finite in this case. Now the main contribution of~\cite{BJN} is that for any irreducible, 
affine Coxeter group $W$, the sequence $(|\Wfc_l|)_{l\geq 0}$ is ultimately 
periodic. It is then a natural question to wonder if this holds for other 
Coxeter groups, a question we answer in the following theorem. 

\begin{theorem}[Classification]
\label{th:classification_intro}
 Let $W$ be an irreducible Coxeter group. The sequence $(|\Wfc_l|)_\ell$ is ultimately periodic if and only if $W$ is either  FC-finite, an affine Coxeter group or has one of the two types
\begin{center}
 \raisebox{0.5cm}{\includegraphics[scale=0.7]{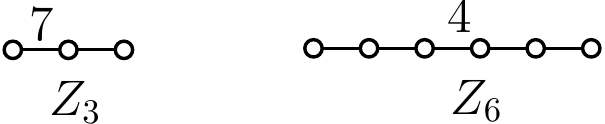}}
\end{center}
In all other cases, $(|\Wfc_l|)_\ell$ grows exponentially fast.
\end{theorem}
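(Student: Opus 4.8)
The plan is to treat the two implications separately, with essentially all of the work going into showing that every irreducible $W$ outside the stated list grows exponentially. For the \emph{periodic} direction there is little to do: if $W$ is FC-finite the sequence $(|\Wfc_l|)_l$ is eventually zero, hence trivially ultimately periodic; the affine case is exactly the theorem of~\cite{BJN}; and the two exceptional types can be dispatched by hand, for instance by running the automaton $\Ared(W)$ of Theorem~\ref{th:rationality} to produce the rational function $\Wfc(t)$ explicitly and checking that its poles are simple and located at roots of unity, which is equivalent to ultimate periodicity.

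The engine for the converse is a monotonicity property under standard parabolic subgroups. For $J\subseteq S$, an element of $W_J$ is FC in $W_J$ if and only if it is FC in $W$, and its length is the same computed in either group, since reduced words of elements of $W_J$ use only letters of $J$ and the braid relations among those letters are identical. Consequently the FC elements of $W_J$ form a length-preserving subset of those of $W$, so the coefficientwise inequality $|\Wfc_l(W_J)|\le |\Wfc_l(W)|$ holds for all $l$; and for a reducible parabolic the generating function factors over connected components, so its radius of convergence is the minimum of those of the pieces. Exponential growth is therefore inherited from any connected induced subdiagram, and it suffices to exhibit a family $\mathcal F$ of ``bad'' connected Coxeter diagrams such that (i) each member of $\mathcal F$ has exponential FC growth, and (ii) every irreducible, non-FC-finite, non-affine, non-exceptional diagram contains some member of $\mathcal F$ as an induced subdiagram.

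For step (i) I would prove exponential growth by a direct combinatorial construction rather than through the analytic form of the generating function. Using Stembridge's heap model, the goal is to produce, for each bad diagram, two reduced FC ``blocks'' $w_0,w_1$ whose arbitrary concatenations $w_{b_1}\cdots w_{b_n}$ are again reduced and FC and whose heaps stack without ever creating a forbidden alternating chain, so that distinct bit strings yield distinct FC elements. This gives an injection of $\{0,1\}^n$ into $\Wfc_{O(n)}$, hence a radius of convergence strictly below $1$; phrased via Theorem~\ref{th:rationality} this is the assertion that the transition matrix of the trimmed automaton $\Ared$ has dominant eigenvalue exceeding $1$, equivalently that $\Ared$ has two distinct cycles through a common state. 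Once (i) and (ii) are in place the clean dichotomy of the theorem is automatic: the periodic list and the exponential list are exhaustive, so no intermediate polynomial regime can survive.

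The main obstacle is step (ii): establishing that $\mathcal F$ is complete, i.e.\ that the ultimately periodic diagrams really are \emph{only} the FC-finite, affine and two exceptional ones. This is a delicate diagrammatic case analysis carried out at the boundary of the known classifications, and it is complicated by the fact that $\mathcal F$ is not finite but organizes into a few explicit infinite families: for instance an affine cycle $\widetilde A$ with one extra pendant node is bad while all of its proper connected induced subdiagrams (a smaller affine cycle, or a tree of FC-finite type) are good, so such diagrams are minimal-bad at every rank. I would therefore structure (ii) around Stembridge's classification of FC-finite types and the list of affine diagrams, arguing that any enlargement of a good diagram---adding a node or edge, or increasing a bond label---that leaves the group irreducible and outside the good list forces one of the families of $\mathcal F$ to appear as an induced subdiagram, and then proving exponential growth uniformly along each such family.
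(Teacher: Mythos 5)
Your outline reproduces the paper's architecture (easy periodic direction; a monotonicity lemma; a family of minimal bad diagrams; exponential growth for each member), and your periodic direction is fine, but the monotonicity tool you chose is too weak, and this opens a genuine gap exactly at the step you defer. You compare groups only along standard parabolic subgroups, i.e.\ \emph{induced} subdiagrams with \emph{equal} labels. The paper's key tool (Lemma~\ref{lemma:monotony}) is strictly stronger: if $\G_1$ embeds into $\G_2$ as a subgraph that need \emph{not} be induced and whose labels need only be \emph{bounded above} by those of $\G_2$, then $|\Wfc_{1,l}|\leq|\Wfc_{2,l}|$ for all $l$. This is not a statement about subgroups --- when an edge is deleted or a label decreased, $W_1$ is not a subgroup of $W_2$ at all --- and the paper must prove it by an argument on commutation classes (each $W_1$-FC class is a union of $W_2$-classes, each of which is shown to be $W_2$-FC). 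Without label monotonicity, your family $\mathcal{F}$ of minimal bad diagrams is infinite in the \emph{label} direction, not just in rank, and its members are pairwise incomparable under your order. Concretely, consider the $3$-vertex path with labels $(3,m)$: for small $m$ it is FC-finite or affine ($A_3$, $B_3$, $H_3$, $\widetilde{G}_2$), the exceptional rank-$3$ type sits at the boundary, and for all larger $m$ the group has exponential growth; since its only proper connected parabolics are $A_2$ and $I_2(m)$ (FC-finite), \emph{each} such $m$ gives a separate minimal element of your $\mathcal{F}$, and exponential growth proved for one label says nothing about the next, because neither path is a parabolic of the other. Under the paper's order all of these collapse onto the single graph with the smallest offending label --- one of the finitely many exceptional graphs $Y_i$ of Lemma~\ref{lemma:mingraphs} --- which can then be killed once and for all by the automaton computation of Theorem~\ref{th:rationality}. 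In your setting no finite computation can terminate the argument; you would need uniform constructions along every label-parametrized family, starting at exactly the right label (the good/bad boundary runs \emph{through} these families), which amounts to re-proving the label monotonicity you are missing. Since you explicitly leave step (ii) --- the content of Lemmas~\ref{lemma:mingraphs} and~\ref{lemma:unboundedgraphs}, i.e.\ the bulk of Section~\ref{sec:classification} --- as ``the main obstacle'' with only an organizational sketch, the heart of the proof is absent.

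A secondary, fixable error: your parenthetical claim that exponential growth of $\Wfc$ is ``equivalently'' the assertion that the trimmed automaton $\Ared$ has two distinct cycles through a common state conflates reduced words with elements. $\Ared$ recognizes $\R(\Wfc)$, and this language can grow exponentially while $(|\Wfc_l|)_l$ stays ultimately periodic: in $\widetilde{A}_3$ the FC element $(s_1s_3s_2s_4)^k$ has $4^k$ reduced words (its heap is a stack of $2k$ two-element antichains), so $\R(\Wfc)$ grows exponentially although $\widetilde{A}_3$ is affine, hence FC-periodic. The automaton relevant to counting \emph{elements} is the one recognizing the normal-form language $\mathcal{L}(W,S,<)$ of Section~\ref{sub:wordstoelements}. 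Your primary device --- distinct bit strings of blocks giving distinct FC elements --- is sound and is indeed how the paper treats the families $GC_n$, $GC'_n$ and $X_n^{*,*}$, but the claimed equivalence should be deleted.
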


The Coxeter groups listed in Theorem~\ref{th:classification_intro} are given by their Coxeter graph in Figure~\ref{fig:FC_Periodic_Graphs}. We note that both $Z_3$ and $Z_6$ occur as  hyperbolic Coxeter groups, as compact and paracompact examples respectively~\cite{Humphreys}.
\smallskip

Theorem~\ref{th:classification_intro} is proved in Section~\ref{sec:classification}, where it will appear again as Theorem~\ref{th:classification}. The proof goes as follows: first, we check that all groups from the list have an ultimately periodic sequence. To prove that all other groups have exponential growth, we first identify which are the minimal ones according to a certain order on Coxeter groups which is compatible with the length of FC elements. Then for each of these minimal types, we either identify a subfamily of FC elements which grows exponentially fast, or resort to Theorem~\ref{th:rationality} and its implementation to obtain the explicit function $\Wfc(t)$.
\medskip

{\noindent\bf Growth of generalized Temperley--Lieb algebras}

For $W$ a Coxeter group with Coxeter matrix $M=(m_{st})_{s,t\in S}$ and $\mathbf{k}$ a unital commutative ring, the associated  {\em Hecke algebra} $\mathcal{H}(W)$ is the $\mathbf{k}$-algebra given by generators $T_s$ and subject to the relations 
$T_s^2=(y-1)T_s+y\mathbf{1}\quad\text{for }s\in S$, and $\braidr{T_sT_tT_s}{m_{st}} = \braidr{T_tT_sT_t}{m_{st}}\quad\text{if }m_{st}<+\infty.$

Here $y\in\mathbf{k}$; if $y=1$ we get the group algebra of $W$. For any $w\in W$, define $T_w\in \mathcal{H}(W)$ by picking any reduced decomposition $s_{1}\cdots s_{m}$ for $w$ and setting $T_w:=T_{s_{1}}\cdots T_{s_{m}}$. These elements $T_w$ then form a basis of $ \mathcal{H}(W)$ (see for instance~\cite{Humphreys}).

 The {\em generalized Temperley--Lieb algebra} $\TL(W)$ is defined as  $\mathcal{H}(W)/\mathcal{J}$ , where $\mathcal{J}$ is the ideal generated by the elements 
$\sum_{w\in W_{s,t}}T_w$ where $3\leq m_{st}<+\infty$
and $W_{s,t}$ is the finite dihedral subgroup generated by $s$ and $t$. 
\begin{theorem*}[{\cite[Theorem 6.2]{Graham}}]
Let $b_w$ be the image of $T_w$ in $\TL(W)$. The elements $b_w$ for $w\in \Wfc$ form a basis of $\TL(W)$.
\end{theorem*}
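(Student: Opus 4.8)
The plan is to show two things: that the $b_w$ for $w \in \Wfc$ span $\TL(W)$, and that they are linearly independent. Spanning is the easier half. Since the $T_w$ for all $w \in W$ form a basis of $\mathcal{H}(W)$, their images $b_w$ certainly span the quotient $\TL(W)$. So it suffices to show that for a non-FC element $w$, the image $b_w$ lies in the span of $\{b_u : u \in \Wfc, \ell(u) \le \ell(w)\}$, or at least in the span of images of strictly ``simpler'' elements. The idea is an induction: if $w$ is not fully commutative, then by the combinatorial characterization of FC elements (any two reduced words are linked by commutations only), $w$ must have a reduced expression containing a braid-interchangeable factor of the form $\braidr{sts}{m_{st}}$ with $3 \le m_{st} < \infty$, i.e.\ a reduced word in which $s$ and $t$ alternate at least $m_{st}$ times. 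I would then use the defining relation of $\mathcal{J}$, namely that $\sum_{v \in W_{s,t}} T_v$ lies in the ideal, to rewrite the corresponding product $T_s T_t T_s \cdots$ (of length $m_{st}$) modulo $\mathcal{J}$ as a $\mathbf{k}$-linear combination of $T_v$ for $v \in W_{s,t}$ of strictly smaller length. Substituting this back expresses $b_w$ as a combination of $b_{w'}$ for elements $w'$ of length $< \ell(w)$, and by induction each such $b_{w'}$ is in the span of the FC basis elements.

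For linear independence, which is the genuinely hard part, a direct argument inside the quotient is awkward because one does not a priori control the kernel $\mathcal{J}$ well enough. The cleanest route is to exhibit a concrete faithful-enough module or a diagrammatic realization. Historically one constructs an explicit action of $\TL(W)$ on a free $\mathbf{k}$-module $V$ with basis indexed by $\Wfc$ (a ``cellular'' or diagram-calculus model), defines operators $\tau_s$ satisfying the relations defining $\TL(W)$, and checks that $b_w \cdot v_e$ (acting on the basis vector indexed by the identity) equals $v_w$ plus lower terms in a suitable ordering of $\Wfc$. Because the leading terms $v_w$ for distinct $w \in \Wfc$ are distinct basis vectors, a triangularity argument forces the $b_w$ to be linearly independent: any nontrivial relation $\sum_w c_w b_w = 0$, applied to $v_e$ and read off in the highest surviving term, yields $c_w = 0$.

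The main obstacle is verifying that the operators $\tau_s$ genuinely descend to a well-defined action of the quotient $\TL(W)$, i.e.\ that they satisfy not only the Hecke relations $\tau_s^2 = (y-1)\tau_s + y$ and the braid relations, but also annihilate the ideal generators $\sum_{v \in W_{s,t}} T_v$. Checking the ideal relations reduces to a computation inside each rank-two dihedral parabolic $W_{s,t}$, where $\Wfc$ restricted to $\{s,t\}$ is well understood, so this is a finite, type-by-type verification in the dihedral case. The other delicate point is establishing the triangularity of $b_w$ acting on $v_e$ with respect to a length-compatible partial order on $\Wfc$; here one uses the property, recalled from Stembridge's work, that reduced expressions of an FC element are exactly the linear extensions of its heap poset, so that multiplying by $T_s$ either increases the heap by adding a generator $s$ at the top (the FC-preserving case, giving the leading term $v_{sw}$) or produces terms that are lower in the order. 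I would carry these out in the order: first the spanning induction, then the construction of $V$ and the operators $\tau_s$, then the verification that the relations hold (with the dihedral ideal check as the crux), and finally the triangularity argument yielding independence.
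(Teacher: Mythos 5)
First, a point of context: the paper does not prove this statement at all --- it is quoted verbatim from Graham's thesis \cite{Graham} and used as a black box --- so there is no internal proof to compare against; your proposal has to stand on its own. The spanning half does: if $w\notin\Wfc$, then by Stembridge's characterization some reduced word for $w$ contains a braid factor $\braidr{st}{m_{st}}$, which represents the longest element of the dihedral group $W_{s,t}$; the ideal generator $\sum_{v\in W_{s,t}}T_v$ lets you replace $T_{\braidr{st}{m_{st}}}$ modulo $\mathcal{J}$ by minus the sum of $T_v$ over the strictly shorter dihedral elements, and since any product $T_{s_1}\cdots T_{s_k}$ expands in the $T$-basis with terms $T_u$ satisfying $\ell(u)\le k$ only, induction on $\ell(w)$ shows every $b_w$ lies in the span of $\{b_u : u\in\Wfc\}$. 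That argument is correct and standard.

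The genuine gap is in the linear independence half, which you rightly call the hard part but then only outline as a program rather than prove. Two steps of that program fail as stated. First, the claim that verifying the ideal relations for your operators $\tau_s$ ``reduces to a computation inside each rank-two dihedral parabolic'' is not right: the operator $\sum_{v\in W_{s,t}}\tau_v$ must annihilate \emph{every} basis vector $v_x$ with $x\in\Wfc$ arbitrary, and how the dihedral sum interacts with an arbitrary FC heap $x$ (whether $sx$, $tsx$, $stsx,\ldots$ remain FC, and what replaces them when they do not) is a global question about $W$, not a rank-two one; this is precisely where the combinatorial difficulty of Graham's theorem is concentrated. Second, your triangularity step --- $\tau_s$ applied to $v_w$ gives $v_{sw}$ plus lower terms --- presupposes, in the case $\ell(sw)>\ell(w)$ but $sw\notin\Wfc$, that you already know how to rewrite the image of $T_{sw}$ in terms of FC basis elements inside the module; but the module action is the very thing being constructed, so without an explicit, relation-verified definition of $\tau_s$ on all of $V$ the argument is circular. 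As it stands, the proposal reproduces the statement of the strategy behind Graham's proof (and its cellular-algebra descendants) without supplying the construction that makes it work, so independence remains unproven.
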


Consider now the natural filtration $\TL(W)_0\subset \TL(W)_1\subset \cdots$ of $\TL(W)$, where $\TL(W)_l$ is the linear span in $\TL(W)$ of all products $b_{s_{1}}\cdots b_{s_{k}}$ with $k\leq l$. A linear basis for $\TL(W)_l$ is given by the $b_w$ where $w \in \Wfc$ satisfies $ \ell(w)\leq l$.

Let the {\em growth} of $\TL(W)$ be $G^W:\ell\mapsto \dim \TL(W)_l=|\{w\in\Wfc\,:\,\ell(w)\leq l\}|.$ The definition depends on the choice of generators, however it makes sense to speak of an exponential growth, or a polynomial growth of degree $d$ without specifying the choice of generators. We refer to~\cite{GrowthAlgebrasBook,Ufnarovskij} for more informations on growth of algebras. 

 The following theorem is a translation of Theorem~\ref{th:classification_intro} in terms of Temperley--Lieb algebras. It generalizes to all Coxeter systems $(W,S)$ the results of ~\cite{Ukraine_simple} and  ~\cite{Ukraine_four} which dealt with the simply-laced system and the rank $4$ case respectively. The families $(PF),(PA)$ and $(PE)$ are given in Figure~\ref{fig:FC_Periodic_Graphs}.

\begin{theorem}
\label{th:growthTL}
 Let $(W,S)$ be an irreducible Coxeter system.\\
 $\bullet$ If $W$ is in $(PF)$, the algebra $\TL(W)$ is finite dimensional.\\
 $\bullet$ If $W$ is in $(PA)\cup (PE)$, the algebra $\TL(W)$  has {\em linear growth}.\\
 $\bullet$ In all other cases, the algebra $\TL(W)$ has \emph{exponential growth}.
\end{theorem}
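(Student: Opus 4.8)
The plan is to derive Theorem~\ref{th:growthTL} as a consequence of Theorem~\ref{th:classification_intro} together with Graham's basis theorem, the only genuine work being the passage from the sequence $(|\Wfc_l|)_l$ to its sequence of partial sums. Indeed, since the $b_w$ for $w\in\Wfc$ with $\ell(w)\leq \ell$ form a basis of $\TL(W)_\ell$, we have $G^W(\ell)=\dim\TL(W)_\ell=\sum_{l=0}^{\ell}|\Wfc_l|$. Thus the growth type of $\TL(W)$ is exactly the growth type of the partial-sum sequence of $(|\Wfc_l|)_l$, and it suffices to determine the latter in each of the three regimes provided by Theorem~\ref{th:classification_intro}.

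First, if $W$ lies in $(PF)$, then $W$ is FC-finite, so $\Wfc$ is finite and $|\Wfc_l|=0$ for $l$ beyond the maximal length of an FC element; hence $G^W(\ell)$ is eventually equal to the constant $|\Wfc|$ and $\TL(W)$ is finite dimensional. Next, if $W$ lies in $(PA)\cup(PE)$, Theorem~\ref{th:classification_intro} gives that $(|\Wfc_l|)_l$ is ultimately periodic, say of period $p$ and with per-period sum $\sigma$ for $l\geq N$. Splitting $\ell=N+kp+r$ with $0\leq r<p$ then yields $G^W(\ell)=k\sigma+O(1)$ with $k\sim(\ell-N)/p$, so that $G^W(\ell)\sim(\sigma/p)\,\ell$: the growth is linear provided $\sigma>0$. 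Finally, in all remaining cases $(|\Wfc_l|)_l$ grows exponentially with some rate $\lambda>1$; since $|\Wfc_\ell|\leq G^W(\ell)$ and, conversely, $G^W(\ell)$ is dominated by a geometric series of ratio $\lambda+\e$ (using $|\Wfc_l|\leq C(\lambda+\e)^l$ eventually, and the a priori bound $|\Wfc_l|\leq|S|^l$), partial summation preserves the exponential rate and $\TL(W)$ has exponential growth.

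The one point requiring care---and the only real obstacle---is to verify that $\sigma>0$ in the second case, i.e. that ultimate periodicity there yields strictly linear rather than merely constant growth. Since the $|\Wfc_l|$ are nonnegative, $\sigma=0$ would force the sequence to vanish eventually, making $W$ FC-finite; but the affine groups and the exceptional types $Z_3,Z_6$ are absent from Stembridge's list of FC-finite groups, so $\Wfc$ is infinite and the families $(PF)$ and $(PA)\cup(PE)$ are disjoint. This guarantees $\sigma>0$, and also shows that no intermediate polynomial growth of degree $\geq 2$ can occur, since the partial sums of a bounded ultimately periodic sequence are always linear.
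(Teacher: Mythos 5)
Your proposal is correct and follows exactly the paper's (largely implicit) argument: Graham's basis theorem identifies $\dim \TL(W)_\ell$ with the partial sums of $(|\Wfc_l|)_l$, and the three cases then follow by translating Theorem~\ref{th:classification_intro}, which is precisely how the paper presents it. Your verification that the per-period sum $\sigma$ is strictly positive in the $(PA)\cup(PE)$ case (via disjointness from Stembridge's FC-finite list) is a detail the paper leaves unstated, and it is handled correctly.
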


\begin{remark} The \emph{Gelfand-Kirillov dimension} of a finitely generated algebra $A$ with growth function $G$ can be defined as $\operatorname{limsup}\left(\log G(l)/\log l\right)$ when $l$ tends to infinity, where $G$ can be computed with respect to any choice of generators.  In the three cases distinguished by Theorem~\ref{th:growthTL}, the Gelfand-Kirillov dimension of $\TL(W)$ is thus $0,1$ or $\infty$ respectively.
\end{remark}

To interpret Theorem~\ref{th:rationality}, it is slightly more pleasant to work with the {\em nil Temperley--Lieb algebra} ${\rm nTL}(W)$: this is the graded algebra associated with the above filtration of $\TL(W)$. By definition, this means that its $l$th graded component is given by the free module $\TL(W)_l/\TL(W)_{l-1}$, and the multiplication is inherited from $\TL(W)$. These algebras seem to have been studied only in type $A_{n-1}$ ~\cite{FominGreene} and $\widetilde{A}_{n-1}$~\cite{Postnikov, BenkartMeinel}. 

The $l$th graded component of ${\rm nTL}(W)$ has a basis $(u_w)$ indexed by FC element of length $\ell$, so its Hilbert series is $\Wfc(t)$, so that we obtain the following reformulation of Theorem~\ref{th:rationality}.

\begin{theorem}
\label{th:nTLrational}
For any Coxeter group $W$, the Hilbert series of ${\rm nTL}(W)$ is rational. 
\end{theorem}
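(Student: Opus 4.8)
The plan is to reduce this statement directly to Theorem~\ref{th:rationality} via a dimension count of the graded pieces. First I would recall that for a nonnegatively graded algebra $A=\bigoplus_{l\geq 0}A_l$, the Hilbert series is by definition $\sum_{l\geq 0}(\dim A_l)\,t^l$. Here $A={\rm nTL}(W)$ is the graded algebra associated with the filtration $\TL(W)_0\subset\TL(W)_1\subset\cdots$, so its $l$th graded component is the successive quotient $\TL(W)_l/\TL(W)_{l-1}$ (with the convention $\TL(W)_{-1}=0$).

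The key step is to identify $\dim\bigl(\TL(W)_l/\TL(W)_{l-1}\bigr)$ with $|\Wfc_l|$. This follows immediately from the fact recalled just above the theorem, namely that $\{b_w : w\in\Wfc,\ \ell(w)\leq l\}$ is a linear basis of $\TL(W)_l$. Passing to the successive quotient kills exactly the basis elements indexed by FC elements of length at most $l-1$, leaving a basis of the $l$th graded component indexed by the FC elements of length exactly $l$. Hence $\dim\bigl(\TL(W)_l/\TL(W)_{l-1}\bigr)=|\Wfc_l|$, and the Hilbert series of ${\rm nTL}(W)$ equals $\sum_{l\geq 0}|\Wfc_l|\,t^l=\Wfc(t)$.

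Once this identification is in place the conclusion is immediate: by Theorem~\ref{th:rationality} the series $\Wfc(t)$ is rational, so the Hilbert series of ${\rm nTL}(W)$ is rational. In this sense there is no genuine obstacle to overcome in the present statement, since the entire combinatorial and automata-theoretic content lives in the proof of Theorem~\ref{th:rationality}. The only point requiring a word of care is the clean matching between the filtration degree of $b_w$ and the Coxeter length $\ell(w)$: it is precisely this matching, encoded in the basis statement for $\TL(W)_l$, that ensures the graded pieces are indexed by $\Wfc_l$ rather than by some coarser or finer invariant, and thus that the Hilbert series is exactly $\Wfc(t)$.
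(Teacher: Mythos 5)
Your proof is correct and takes exactly the route the paper itself uses: the basis $\{b_w : w\in\Wfc,\ \ell(w)\leq l\}$ of $\TL(W)_l$ induces a basis of each graded piece $\TL(W)_l/\TL(W)_{l-1}$ indexed by the FC elements of length exactly $l$, so the Hilbert series of ${\rm nTL}(W)$ is $\Wfc(t)$, and rationality is then immediate from Theorem~\ref{th:rationality}. The paper treats this statement as a direct reformulation of Theorem~\ref{th:rationality}, precisely as you do.
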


This article is structured as follows. In Section~\ref{sec:coxeter_fullycom} we give necessary definitions about Coxeter groups and fully commutative elements. Section~\ref{sec:rationality} is devoted to the proof of Theorem~\ref{th:rationality}. The key result is that the reduced expressions of FC elements form a regular language, see Theorem~\ref{th:reducedfcregular}. The proof of Theorem~\ref{th:classification_intro} is the content of Section~\ref{sec:classification}, and we conclude with some perspectives in Section~\ref{sec:perspectives}.

\section{Coxeter groups and fully commutative elements}
\label{sec:coxeter_fullycom}

\subsection{Coxeter groups}
\label{sub:coxeter}
 Let $S$ be a finite set, and $M=(m_{st})_{s,t\in S}$ a symmetric matrix  satisfying $m_{ss}=1$ and $m_{st}\in \{2,3,\ldots\}\cup\{\infty\}$. The \emph{Coxeter group} $W$ associated with $M$ is generated by $S$  with relations $(st)^{m_{st}}=1$ for all $s,t\in S$ such that $m_{st}<\infty$. 
 
 Equivalently, $W$ is generated by $S$ whose elements satisfy $s^2=1$ as well as the {\em braid relations} $[st\cdots]_{m_{st}}=\braidr{ts}{m_{st}}$  if  $s\neq t$ and  $m_{st}<\infty$. We call {\em braid factors} the two alternating words in $\{s,t\}$ of length $m_{st}$. In the special case $m_{st}=2$ we have a {\em commutation relation}.

 A Coxeter group is encoded by its associated {\em Coxeter graph} $\Gamma(W)$, which has vertex set $S$, edges $\{s,t\}$ if $m_{st}>2$ with edge labels $m_{st}$ if $m_{st}>3$. The group $W$ is called {\em irreducible} if $\Gamma(W)$ is connected.
\smallskip

\begin{example}
\label{ex:coxetergraph}
  Define the group $W_0$ with generators $\{s_0,s_1,s_2\}$ and matrix $M$ given by  $m_{s_0s_2}=2$, $m_{s_0s_1}=4$ and $m_{s_1s_2}=5$. The relations of $W_0$ are $s_i^2=1$ for $i=0,1,2$ together with the braid relations $
su=us; stst=tsts; tutut=ututu$. Its graph is thus $\Gamma(W_0)=$ \raisebox{-0.25cm}{\includegraphics{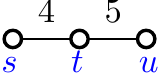}}.

\end{example}

Let $(W,S)$ be a Coxeter system. The {\em length} $\ell(w)$ of an element $w\in W$ is the smallest integer $l$ such that $w=s_1s_2\cdots s_l$ for some $s_i\in S$. Such minimal words representing $w$ are called {\em reduced expressions} for $w$, and form  together the set $\R(w)$. For any subset $A\subseteq W$, define the two generating functions
\begin{align*}
 A(t)=\sum_{w\in A}t^{\ell(w)}\quad \text{and} \quad\R(A)(t)=\sum_{w\in A}|\R(w)|t^{\ell(w)}
\end{align*} 

These notions can be illustrated with the \emph{Cayley graph} of $W$ with respect to the generators $S$: this is the graph with vertex set $W$ and edges $\{w,ws\}$. Then $\ell(w)$ is the graph distance from the identity to $w$, and reduced words correspond to paths realizing this distance. The series $A(t)$ and $\R(A)(t)$ correspond respectively to the \emph{spherical} and \emph{geodesic} growth series of the subset $A$.

We finally record a fundamental property of reduced expressions.

\begin{proposition}[Matsumoto's property~\cite{Matsumoto}]
\label{prop:matsumoto}
Given two words in $\R(w)$, one can always go from one to the other by using only braid relations.
\end{proposition}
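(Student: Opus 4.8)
The plan is to argue by induction on the length $\ell(w)$, the base case $\ell(w)=0$ being vacuous. Suppose $s_1\cdots s_l$ and $s'_1\cdots s'_l$ are two reduced expressions for $w$. The easy case is $s_1=s'_1$: then $s_2\cdots s_l$ and $s'_2\cdots s'_l$ are both reduced expressions for $s_1w$, an element of length $l-1$, so by the inductive hypothesis they are linked by braid relations, and prepending $s_1$ transports this chain back to the two original words. Everything therefore reduces to the case where the two expressions begin with distinct letters $s:=s_1\neq s'_1=:t$.

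In that case both $s$ and $t$ are left descents of $w$, meaning $\ell(sw)<\ell(w)$ and $\ell(tw)<\ell(w)$. The crux is the following dihedral lemma: if $s\neq t$ are both left descents of $w$, then $m_{st}<\infty$ and $w$ admits a reduced expression beginning with the braid factor $\braidr{st}{m_{st}}$. Granting this, the argument closes at once. Write such an expression as $\braidr{st}{m_{st}}\,u$. Since it begins with $s$, the easy case above shows it is braid-equivalent to $s_1\cdots s_l$. Applying the single braid relation $\braidr{st}{m_{st}}=\braidr{ts}{m_{st}}$ produces a reduced expression beginning with $t$, which by the easy case again is braid-equivalent to $s'_1\cdots s'_l$. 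Concatenating these three chains links the two original expressions using braid relations only.

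The main obstacle is thus the dihedral lemma, and this is where I would invoke the Exchange Condition (standard for Coxeter groups and itself derivable from the geometric representation on the root system). Conceptually, the reason is that in the dihedral subgroup generated by $s$ and $t$ the only element having both $s$ and $t$ as left descents is its longest element, which exists precisely when $m_{st}<\infty$ and then equals $\braidr{st}{m_{st}}$; this already forces finiteness of $m_{st}$. To make this effective one grows an alternating reduced prefix of $w$ one letter at a time, starting from the fact that $w$ has reduced expressions beginning with $s$ and with $t$. The delicate point, and the genuine content of the lemma, is verifying that each prepending of the next alternating letter truly decreases length, so that a strictly reduced prefix is built; this is exactly what the Exchange Condition supplies. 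If $m_{st}=\infty$ the process would never terminate, forcing arbitrarily long reduced prefixes inside the fixed-length word $w$, a contradiction; hence $m_{st}<\infty$ and the alternating prefix of length $m_{st}$ is realized, as claimed.
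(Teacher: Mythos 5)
Your proposal is correct, but there is nothing in the paper to compare it against: the paper states Matsumoto's property as a classical result and cites Matsumoto's 1964 paper without giving any proof. What you have written is essentially the standard textbook proof of the word property (due to Tits and Matsumoto, and presented for instance in Bj\"orner--Brenti): induction on $\ell(w)$, the immediate reduction when the two reduced expressions share their first letter, and, when they begin with distinct left descents $s\neq t$, the dihedral lemma asserting that $m_{st}<\infty$ and that $w$ admits a reduced expression beginning with $\braidr{st}{m_{st}}$, after which a single braid move and two applications of the easy case close the induction. The logic is sound and non-circular, since the Exchange Condition is established independently of the word property. The one place where your write-up compresses a genuine step is inside the dihedral lemma itself: the Exchange Condition, applied to a reduced expression of $w$ beginning with an alternating prefix of length $k<m_{st}$, produces a deletion position, and one must still rule out that the deleted letter falls \emph{inside} the alternating prefix rather than in the tail; this is where one uses that elements of length less than $m_{st}$ in the dihedral parabolic subgroup have a unique reduced expression (or an equivalent length computation in that subgroup). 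Your sketch attributes the whole step to the Exchange Condition, which is acceptable as a summary but is precisely what a complete write-up would need to spell out; the termination argument forcing $m_{st}<\infty$ is then exactly as you say, since a reduced prefix cannot be longer than $\ell(w)$.
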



\subsection{Fully commutative elements}
\label{sub:fullycomm}
Given a graph $\G$ with vertex set $S$, a $\G$-commutation class is an equivalence class of words in $S^*$ under the equivalence relation generated by the commutation relations $\w st\w'\sim\w ts\w'$ if $\{s,t\}$ is not an edge of $\Gamma$. We write $\C_\G(\w)$ for the commutation class of $\w$.
\begin{definition}[Fully commutative elements]
\label{defi:fullycomm}
An element $w\in W$ of a Coxeter group is fully commutative (FC in short) if $\R(w)$ is a $\G(W)$-commutation class. 
\end{definition}
The set of FC elements of $W$ will be denoted by $\Wfc$. An element is FC if one can always go from one reduced expression to another by using only commutation relations. In comparison, Proposition ~\ref{prop:matsumoto} says that if all braid relations are allowed, then all elements of $W$ have this property.
\smallskip

By a result of Stembridge \cite[Prop. 2.1]{St1}, if $w$ is FC then no word in $\R(w)$ contains a braid word. Therefore FC elements are in bijective correspondence with commutation classes $\C$ of reduced words such that no element of $\C$ contains a braid factor. It is in fact particularly simple to check that expressions are reduced in this case, as the following lemma shows.

\begin{lemma}
\label{lem:FCcommclasses}
The map $w\mapsto \R(w)$ restricts to a bijection between FC elements and commutation classes $\C$ such that no element in $\C$ has a braid factor $\braidr{st}{m_{st}}$ for a $m_{st}>2$ or a square $s^2$.
\end{lemma}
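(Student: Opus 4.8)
The plan is to verify the three ingredients of a bijection while leaning on the correspondence already recorded above, namely that FC elements are in bijection with commutation classes of \emph{reduced} words having no braid factor. Since a reduced word can never contain a square $s^2$, every class occurring in that correspondence is a class of the kind described in the statement; conversely, the only thing left to prove is that a commutation class $\C$ avoiding every braid factor $\braidr{st}{m_{st}}$ (with $m_{st}>2$) and every square automatically consists of reduced words. Granting this, such a $\C$ is a commutation class of reduced words with no braid factor, hence equals $\R(w)$ for a unique FC element $w$, which yields surjectivity; well-definedness follows from Definition~\ref{defi:fullycomm} together with Stembridge's~\cite[Prop.~2.1]{St1} (no braid factor) and the fact that reduced words have no square, while injectivity is immediate since distinct elements have disjoint, nonempty reduced-expression sets.

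First I would observe that the commutation moves $\w st\w'\sim\w ts\w'$ use only pairs with $m_{st}=2$, so the relation $st=ts$ holds in $W$; hence all words in a single commutation class represent one and the same element $w$, and in particular share a common length. It therefore suffices to rule out that some (equivalently, every) word of $\C$ is non-reduced.

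The crux is this reducedness claim, which I would obtain from Tits' solution to the word problem (see~\cite{BjorBrenbook}): a word is non-reduced if and only if some sequence of length-preserving braid moves $\braidr{st}{m_{st}}\leftrightarrow\braidr{ts}{m_{st}}$ carries it to a word containing a square $ss$. Suppose for contradiction that some $\w\in\C$ is non-reduced, and fix such a sequence $\w=\w_0\to\w_1\to\cdots\to\w_k$ with $\w_k$ containing a square. The idea is to separate commutation moves (those with $m_{st}=2$), which keep us inside $\C$, from genuine braid moves (those with $m_{st}>2$). If every move is a commutation, then $\w_k\in\C$ contains a square, contradicting the hypothesis on $\C$. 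Otherwise let $j$ be the first index at which a genuine braid move is applied; all earlier moves are commutations, so $\w_j\in\C$, yet the move $\w_j\to\w_{j+1}$ rewrites a braid factor $\braidr{st}{m_{st}}$ with $m_{st}>2$, meaning $\w_j$ contains such a factor, again contradicting the hypothesis on $\C$. Either way we reach a contradiction, so every word of $\C$ is reduced.

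The main obstacle is the bookkeeping in this last step: one must invoke Tits' theorem in exactly the form giving a length-preserving braid sequence ending in a square, and then carefully track that the commutation moves are precisely those preserving membership in $\C$, so that the \emph{first} genuine braid move is what produces the forbidden braid factor inside $\C$. Once reducedness is established, the identification $\C=\R(w)$ and the fact that $w$ is FC follow from the correspondence recalled just before the lemma, completing the bijection.
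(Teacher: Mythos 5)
Your proof is correct and follows essentially the same route as the paper: both reduce the lemma to showing that a class $\C$ with no braid factor and no square consists of reduced words, and both deduce this from Tits' solution to the word problem, observing that only commutation moves can ever be applied to words of $\C$. Your version merely spells out the ``first genuine braid move'' bookkeeping that the paper's two-line proof leaves implicit.
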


\begin{proof}
We must show that such a commutation class $\C$ is formed of reduced words. Now by Tits' result~\cite{Tits68}, any word representing $w\in W$ can be transformed into a reduced word by a series of braid relations and deletions of squares $s^2$. But  there are no braid relations to apply in any word of $\C$, so these are indeed reduced.
\end{proof}

\subsection{Heaps}
\label{sub:heaps}  
To deal nicely with commutation classes, we will use the concept of {\em heaps} as developed by Viennot~\cite{ViennotHeaps}. we use the definition given in \cite{GreenBook}.

\begin{definition}[$\G$-Heaps] Let $\Gamma$ be a finite graph with vextex set $V$ and edge set $E$. A {\em heap} on  $\G$ is a poset $(H,\preceq)$ together with labels $\e:H\to V$ such that (a) if $\{s,t\}\in E$, the set $H_{s,t}$ of elements of $H$ with labels in $\{s,t\}$ form a chain, and (b) the order $\preceq$ is the transitive closure of the union of all chains above.
\end{definition}

 More explicitly, (b) amounts to saying that $i\prec j$ in $H$ only if there exists a chain $i= h_0\prec h_1 \prec h_2\prec \cdots\prec  h_r=j$ in which $\{\e(h_p),\e(h_{p+1})\}\in E$ for any $p<r$. Since $H_{s,t}$ is linearly ordered, it can be encoded with the word in $V^*$ formed by the labels of $H_{s,t}$ in increasing order.
We shall in fact consider heaps up to isomorphism: $H$ and $H'$ are isomorphic if there exists a poset isomorphism from $H$ to $H'$ which preserves labels.

\smallskip
\begin{figure}[!ht]
\includegraphics{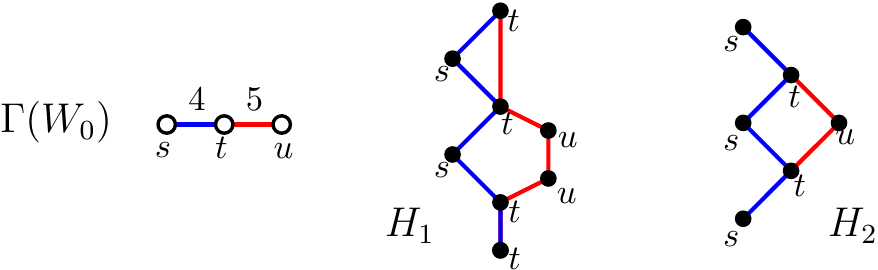}
\caption{Two heaps associated with the graph of $W_0$. The heaps are represented by their defining chains, oriented from bottom to top.\label{fig:heaps}}
\end{figure}

There is a function $\w\mapsto \H_\G(\w)$ between words and $\G$-heaps, defined as follows. Let $\w=s_{1}\cdots s_{l}\in V^*$. Define a partial ordering $\prec$ of  $\{1,\ldots, l\}$ as follows: set $i\prec j$ if $i<j$ and $\{s_{i},s_{j}\}$ is an edge of $\G$, and extend to a partial order by transitivity. We get a heap $\H_\G(\w)$  by naturally labeling $i$ by $s_{a_i}$. For instance, words $ttusutst$ and $stsuts$ give rise to heaps $H_1$ and $H_2$ of Figure~\ref{fig:heaps} respectively. 

The fundamental property is that {\em $\H_\G$ is a bijection between $\G$-commutation classes and $\G$-heaps}. The inverse bijection is obtained by considering all linear extensions of $H$ and reading the words formed by the labels.
\medskip

\subsection{FC heaps} Now assume $\G=\Gamma(W,S)$ is the Coxeter graph of $(W,S)$, so that $V=S$. By Lemma~\ref{lem:FCcommclasses}, FC elements are in bijection with certain $\G$-commutation classes; it is natural to try to characterize the corresponding heaps. Consider the following two properties of a $\G$-heap $H$:

 (h1) $H$ contains no covering relation $i\prec j$ with $\e(i)=\e(j)$.

 (h2) $H$ contains no convex chain $i_1\prec i_2\prec i_3\prec\cdots \prec i_{m_{st}}$ with alternating labels $s,t,s,\ldots$ for $m_{st}<+\infty$. Here $K\subset H$ is convex if whenever  $h\prec x\prec h'$, we have that $h,h'\in K$ imply $x\in K$.
\smallskip

\begin{proposition}[{\cite[Proposition 3.3]{St1}}]
\label{prop:bij_fc_elements_heaps}
The map $\w\mapsto \H_\G(\w)$ induces a bijection between $\Wfc$ and the set of $\G$-heaps which satisfy (h1) and (h2).
 Moreover $\ell(w)=|\H_\G(w)|$, and the reduced expressions for $w$ correspond bijectively to linear extensions of $\H_\G(w)$.
\end{proposition}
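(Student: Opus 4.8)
The plan is to obtain the bijection by composing two correspondences that are already available and then matching the defining conditions on each side. By Lemma~\ref{lem:FCcommclasses}, the map $w\mapsto\R(w)$ identifies $\Wfc$ with the set of $\G$-commutation classes $\C$ such that no word of $\C$ contains a square $s^2$ or a braid factor $\braidr{st}{m_{st}}$ with $m_{st}>2$. On the other hand, the fundamental property of heaps identifies $\G$-commutation classes with $\G$-heaps, the words of a class being exactly the linear extensions of the associated heap $H=\H_\G(\w)$. It therefore suffices to show that, for such an $H$, the absence of a square in every word of $\C$ is equivalent to (h1), and the absence of a braid factor is equivalent to (h2); the restriction of $\H_\G$ to $\Wfc$ then lands precisely in the heaps satisfying (h1) and (h2) and is a bijection.

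For the first equivalence I would use the (standard) convention that the elements of $H$ sharing a fixed label form a chain, so that any two equally labelled elements are comparable. If some word of $\C$ has a consecutive pair $ss$, the two corresponding elements $i,j$ are adjacent in the associated linear extension; since they are comparable, say $i\prec j$, no element can lie strictly between them in the poset, so $i\prec j$ is a covering relation with $\e(i)=\e(j)$ and (h1) fails. Conversely, a covering relation $i\prec j$ with $\e(i)=\e(j)$ can be realized as an adjacent pair in some linear extension, yielding a square. Hence $\C$ has no square if and only if $H$ satisfies (h1).

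For the second equivalence the key tool is the elementary fact that a subset $K$ of a finite poset is convex if and only if it occupies consecutive positions in some linear extension. If a word of $\C$ contains a braid factor of length $m_{st}$ with alternating labels $s,t$, the corresponding elements all lie in $H_{s,t}$, which is a chain because $\{s,t\}$ is an edge when $m_{st}>2$; being consecutive in a linear extension they form a convex chain with alternating labels, so (h2) fails. Conversely, a convex chain with alternating labels of length $m_{st}$ can be placed as a single increasing block in some linear extension, and reading off its labels produces the braid factor $\braidr{st}{m_{st}}$. Hence $\C$ has no braid factor if and only if $H$ satisfies (h2).

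The two ``moreover'' assertions are then immediate: the words of $\C=\R(w)$ are by construction the linear extensions of $\H_\G(w)$, which is exactly the claimed bijection between reduced expressions and linear extensions, and since each such word uses one letter per element of the heap we get $\ell(w)=|\H_\G(w)|$. I expect the main obstacle to be the careful proof of the convexity/consecutive-block lemma and its compatibility with the alternating-label requirement, together with making the same-label chain convention fully explicit so that the square/(h1) step is watertight.
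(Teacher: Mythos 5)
Your proposal is correct and follows essentially the same route as the paper, which cites Stembridge and sketches exactly this argument: combine Lemma~\ref{lem:FCcommclasses} with the commutation-class/heap bijection, matching (h1) with the absence of squares and (h2) with the absence of braid factors in linear extensions. Your added details (the convention that equally labelled elements form a chain, the covering-pair and convex-block facts about linear extensions) are the right ones and correctly fill in what the paper leaves implicit.
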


The proof uses Lemma~\ref{lem:FCcommclasses}: (h1) ensures that the linear extensions of $\H(w)$ have no occurrence of $s^2$, while (h2) prevents linear extensions of $H$ to contain a braid word as a factor.

A $\G$-heap satisfying (h1) and (h2) will be called a \emph{FC heap}.
Given a $\G$-heap $H$, denote by $\max(H)$ the subset of $S$ formed by the labels of the maximal elements of $S$. Then we have the following proposition.

\begin{proposition}
\label{prop:maxdes}
If $w\in \Wfc$ and $H=\H(w)$, then for any $s\in S$ one has $\ell(ws)>\ell(w)$ if and only if $s\in \max(H)$.
\end{proposition}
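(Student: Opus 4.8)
The plan is to transport the statement, through the bijection of Proposition~\ref{prop:bij_fc_elements_heaps}, into a purely combinatorial question about the heap $H=\H(w)$, and then read it off from the structure of linear extensions. Recall that $\ell(w)=|H|$ and that the reduced expressions of $w$ are exactly the words obtained by reading the labels along the linear extensions of $H$. The first step is the elementary observation that the extremal letters of these words are governed by the extremal elements of the poset: a generator $s$ occurs last in some linear extension of $H$ precisely when $s$ is the label of a maximal element, and conversely every maximal label is realized this way, since one produces a linear extension by peeling off maximal elements one at a time. Thus $\max(H)$ is exactly the set of generators that can appear at the relevant extreme of a reduced expression.

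From here I would invoke the standard length dichotomy for right multiplication (\cite[Section 1.4]{BjorBrenbook}): for any $w$ and any $s\in S$ exactly one of $\ell(ws)=\ell(w)+1$ or $\ell(ws)=\ell(w)-1$ occurs. In the heap picture the increasing case $\ell(ws)>\ell(w)$ corresponds to being able to adjoin a fresh piece labelled $s$ at the top of $H$, so that $\H(ws)$ is obtained from $H$ by adding a single maximal element carrying the label $s$; the new piece then sits among the maximal elements, which is exactly the condition recorded by $s\in\max(H)$. Combining this with the correspondence of the previous paragraph turns the dichotomy into the asserted equivalence $\ell(ws)>\ell(w)\iff s\in\max(H)$, with the two implications being mirror images of each other.

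The one substantive point, and the step I expect to be the main obstacle, is the compatibility between the heap order and right multiplication: one must verify that passing from $w$ to $ws$ corresponds at the level of heaps to adjoining or deleting a \emph{single} top piece labelled $s$, and that this operation keeps us inside the class of FC heaps, that is, does not secretly violate (h1) or (h2). This is precisely where Lemma~\ref{lem:FCcommclasses} and the final assertion of Proposition~\ref{prop:bij_fc_elements_heaps} are needed: conditions (h1) and (h2) guarantee that adjoining the top piece creates neither a square $s^2$ nor a completed braid, so that the length changes by exactly one and the sign is dictated solely by the top of $H$. Once this bookkeeping is secured, both directions of the equivalence follow immediately, and the proposition is proved.
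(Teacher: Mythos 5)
Your first paragraph is, in substance, the paper's entire proof: the paper argues in one line that $\ell(ws)\neq\ell(w)$-increase is detected by whether $w$ has a reduced expression ending in $s$, which by Proposition~\ref{prop:bij_fc_elements_heaps} is detected by whether some linear extension of $H$ ends with the label $s$, which by your peeling-off-maxima observation happens exactly when $s\in\max(H)$. The genuine gap is in your second paragraph, and it is precisely the step that makes the proof appear to close. When $\ell(ws)>\ell(w)$, the fresh piece labelled $s$ is maximal in the \emph{new} heap $\H(ws)$ -- a condition that holds automatically whenever the piece is adjoined -- and this says nothing about $s\in\max(H)$, which concerns the \emph{old} heap. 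In fact the two situations are mutually exclusive: by your own first paragraph combined with the standard descent criterion (\cite[Section 1.4]{BjorBrenbook}: $\ell(ws)<\ell(w)$ if and only if some reduced word for $w$ ends in $s$ -- note the direction of the inequality), one obtains $s\in\max(H)\iff\ell(ws)<\ell(w)$, equivalently $\ell(ws)>\ell(w)\iff s\notin\max(H)$. A sanity check: for $w=st$ with $m_{st}=3$, one has $\max(H)=\{t\}$, $\ell(wt)=1<2$ and $\ell(ws)=3>2$. So the chain of equivalences, carried out carefully, yields the statement with the inequality reversed; the inequality as printed in the proposition (and in the paper's own one-line justification, which asserts ``$\ell(ws)>\ell(w)$ iff $w$ has a reduced expression ending with $s$'') is inverted, and the orientation actually used later in the paper -- in the proof of Lemma~\ref{lem:technical}, where $s\in M=\max(H)$ triggers the sink state because appending $s$ violates (h1) -- is the corrected one. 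Your paragraph silently flips the direction to force agreement with the printed statement instead of flagging the discrepancy; a blind proof should have derived the corrected orientation and noted the conflict.

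Separately, the ``one substantive point'' you isolate in your final paragraph is both false and unnecessary. You claim that (h1) and (h2) guarantee that adjoining the top piece ``creates neither a square $s^2$ nor a completed braid,'' so that one stays inside the class of FC heaps. This fails: with $w=st$ and $m_{st}=3$, the element $w$ is FC and $\ell(ws)=3>\ell(w)$, yet $ws=sts$ is \emph{not} fully commutative -- the adjoined piece completes a braid and $\H(ws)$ violates (h2). Fortunately, no such compatibility is needed: the proposition is a statement about $w$ alone, and the correct proof never examines $\H(ws)$. One needs only (a) the descent dichotomy in $W$, (b) the bijection of Proposition~\ref{prop:bij_fc_elements_heaps} between reduced expressions of $w$ and linear extensions of $H$, and (c) your observation that the labels ending linear extensions are exactly $\max(H)$. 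That three-link chain is the paper's whole proof; your attempt contains all of its ingredients but breaks it at the middle link by reading off maxima of the wrong heap.
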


Indeed $\ell(ws)>\ell(w)$ iff $w$ has a reduced expression ending with $s$ iff $H$ has a linear extension ending with a label $s$ iff $s\in \max(H)$.
\section{Rationality}
\label{sec:rationality}
In this section we will prove Theorem~\ref{th:classification_intro}, which states that $\Wfc(t)$ is a rational series for any Coxeter system $(W,S)$. The fundamental tool we use is the concept of language recognized by a finite automaton, which we recall in the first paragraph.


\subsection{Finite automata} We refer to the book~\cite{Saka} for a general reference on the theory of automata. We only need the most basic notions.

 A \emph{finite, deterministic, complete automaton $\A$} on the alphabet $S$ is the datum $(Q,\delta,q_0,F)$ of a finite set of \emph{states} $Q$, a transition function $\delta:Q\times S\to Q$, an initial state $q_0\in Q$, and a subset $F\subseteq Q$ of final states. The function $\delta$ is extended to $\delta:Q\times S^*\to Q$ by defining recursively $\delta(q,\epsilon)=q$ and $\delta(q,\w s)=\delta(\delta(q,\w),s)$. We identify the automaton $\A$ with its associated labeled multigraph with vertex set $Q$ and oriented, labeled edges $q\stackrel{s}{\to} q'$ whenever $\delta(q,s)=q'$; see Figure~\ref{fig:petitautomate} for an example in which the final states are $q_0$ and $q_1$.
\begin{figure}[!ht]
\includegraphics{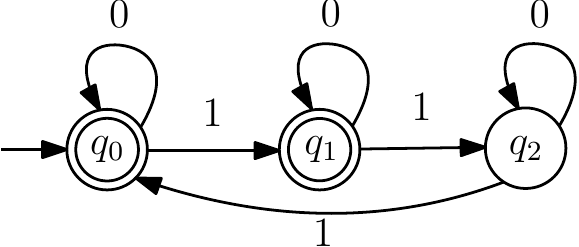}
\caption{Automaton on the alphabet $S=\{0,1\}$. \label{fig:petitautomate}}
\end{figure}

The language $L(\A)$ recognized by the automaton $\A$ is the set of words $\w\in S^*$ such that $\delta(q_0,\w)\in F$.  In the graph, $L(\A)$ corresponds to the set of \emph{successful paths} which are the paths going from $q_0$ to a state $q\in F$. In the example of Figure~\ref{fig:petitautomate}, the automaton recognizes the language of all words on $\{0,1\}$ such that the number of occurrences of $1$ is congruent to $0$ or $1$ modulo $3$.

 Languages which can be recognized by a finite automaton are called \emph{regular}. Regular languages are closed under complementation and intersection. In general they are known to coincide with the class of so-called rational languages; this is a famous theorem of Kleene.

\begin{proposition}
 \label{prop:regulartoseries}
 The length generating function of a regular language is a rational series.
 \end{proposition}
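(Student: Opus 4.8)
The plan is to show that the length generating function
\[
L(\A)(t)=\sum_{\w\in L(\A)}t^{|\w|}
\]
of a regular language is rational, by exploiting the finite-state structure of the recognizing automaton. First I would fix a finite, deterministic, complete automaton $\A=(Q,\delta,q_0,F)$ with $L(\A)=L$. The key observation is that $|\w|$, the length of a word, is simply the length of the corresponding successful path in the associated labeled graph, so counting words of length $l$ amounts to counting successful paths of length $l$. The natural tool for counting paths of a given length in a finite graph is the \emph{transfer matrix method}.

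Concretely, I would introduce the $Q\times Q$ transfer matrix $T$ defined by $T_{q,q'}=\#\{s\in S:\delta(q,s)=q'\}$, which records the number of edges of each type in the graph of $\A$. A standard induction then gives that $(T^l)_{q,q'}$ equals the number of paths of length $l$ from $q$ to $q'$, hence the number of words $\w$ of length $l$ with $\delta(q,\w)=q'$. Summing over $q'\in F$ yields $|L\cap S^l|=\sum_{q'\in F}(T^l)_{q_0,q'}$. Writing $u$ for the row indicator vector of $\{q_0\}$ and $v$ for the column indicator vector of $F$, this reads $|L\cap S^l|=u\,T^l\,v$.

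It then follows that
\[
L(\A)(t)=\sum_{l\geq 0}(u\,T^l\,v)\,t^l=u\Bigl(\sum_{l\geq 0}(tT)^l\Bigr)v=u\,(I-tT)^{-1}\,v,
\]
where the geometric series converges as a formal power series since $tT$ has entries in $t\,\mathbf{Z}[[t]]$ and $I-tT$ is therefore invertible over $\mathbf{Z}[[t]]$. By Cramer's rule, the entries of $(I-tT)^{-1}$ are rational functions of $t$ with common denominator $\det(I-tT)$, a nonzero polynomial with constant term $1$. Hence $L(\A)(t)$ is a $\mathbf{Z}$-linear combination of such rational functions, and is therefore itself rational.

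The one point requiring a little care, rather than being a genuine obstacle, is the determinism and completeness of $\A$: these guarantee that distinct words correspond to distinct paths from $q_0$, so that $|L\cap S^l|$ genuinely equals the number of successful paths of length $l$, with no double-counting and no undercounting. With $\A$ deterministic and complete this identification is immediate, and since every regular language is by definition recognized by such an automaton the argument applies in full generality. I expect the verification that $(T^l)_{q,q'}$ counts length-$l$ paths to be the most technical step, but it is a routine induction on $l$.
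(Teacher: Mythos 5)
Your proof is correct and follows essentially the same route as the paper: both define the transfer matrix counting edges of the automaton's graph, sum the geometric series to obtain $(I-tT)^{-1}$, and invoke the adjugate/Cramer formula to conclude rationality, summing entries over final states. Your explicit remark about determinism and completeness guaranteeing a bijection between words and successful paths is a point the paper leaves implicit, but the argument is the same.
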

\begin{proof}  Let us sketch a proof of this well-known fact. Given a regular language $L(\A)$, consider the matrix $A=(a_{qq'})_{q,q'\in S}$  where  $a_{qq'}$ is the number of oriented edges from $q$ to $q'$ in the graphic representation of $\A$ (\emph{i.\,e.} the number of $s\in S$ such that $\delta(q,s)=q'$). Then the entry $p_{qq'}$ in the matrix $P:=I+tA+t^2A^2+\cdots=(I-tA)^{-1}$ is the length generating functions of all paths from $q$ to $q'$. Thanks to the adjugate matrix formula, this entry is a rational function in $t$ with $\det(I-tA)$. To obtain the length generating function of $L(\A)$, we notice that it is the sum of $p_{q_0,q}$ for  $q\in F$, so it is also a rational function with denominator $\det(I-tA)$.
\end{proof}

  As a consequence of the proof, notice that the degree of the denominator of the rational function is not larger than the number of states of $\A$.

\subsection{Reduced expressions of FC elements} 

For the rest of this chapter, we fix a Coxeter system $(W,S)$ with graph $\Gamma=\Gamma(W,S)$. In this paragraph, we shall define an automaton $\Ared$ who recognizes the set of reduced words of FC elements in $(W,S)$.

  The main question for designing such an automaton is the following: given such a word $\w=s_1\cdots s_k$, what information is enough to ensure that $\w s$ is a reduced FC word ? First it is clear that this only depends on the element $w$ represented by $\w$. Lemma~\ref{lem:FCcommclasses} tells us that we should consider the commutation class of $\w s$ and check that it contains no square $s^2$ or braid word. In terms of heaps, we need to check that $\H(\w s)=\H(\w)\cdot s$ is a FC heap, i.e. the conditions form Proposition~\ref{prop:bij_fc_elements_heaps} hold.
 
  Before stating the main definition, notice that if $w$ is a FC element and $\w'$ is any word, then we can define without ambiguity $\C(w\w')$ as the commutation class $\C(\w\w')$ where $\w$ is any reduced expression for $w$. The heap $\H(w\w')$ is defined similarly.

\begin{definition}
\label{defi:qe}
Let $w$ be a FC element with heap $H$, and $s,t\in S$ be such that $3\leq m_{st}<\infty$. We define $q_{s,t}(w)=q_{s,t}(H)$ to be the word $\braidl{ts}{r}\in S^*$ which is maximal among alternating suffixes of the chain $H_{st}$ such that $\C(\w\braidr{ts}{m_{st}-r})$ contains an element with a factor $\braidr{st}{m_{st}}$ or $\braidr{ts}{m_{st}}$. 
\end{definition}

Equivalently, $r$ is chosen maximal so that $\H(\w\braidr{ts}{m_{st}-r})$ contains a convex, alternating $st$-chain of length $m_{st}$.

It is important to note that $q_{s,t}(w)$ need not occur as a suffix of any reduced word for $w$. Indeed, consider the FC element $w=s_1s_2s_3$ in type $A_3$, i.e. with braid relations $s_1s_3=s_3s_1, s_1s_2s_1=s_2s_1s_2, s_2s_3s_2=s_3s_2s_3$. Then $q_{s_1,s_2}(w)=s_1s_2$ as is easily checked.
\medskip

 Define $q(H)=(q_e(H))_e$ where $e$ runs through all edges with finite label.  We represent $q(H)$ by a diagram as follows: we start from $\Gamma(W,S)$ and keep only its edge labels equal to $+\infty$. For all other edges $e$, if $q_e(H)=\braidr{st}{r}$ with $r>0$, then we orient $e$ towards $t$ and label it with $r$; if $q_e(H)=\epsilon$ we leave the edge unlabeled and unoriented.

\begin{example}
\label{ex:w0state}
 Consider the Coxeter graph $\Gamma(W_0,\{s,t,u\})$ from Example~\ref{ex:coxetergraph}, and the FC element $stsuts$ whose heap is $H_2$ on the right of Figure~\ref{fig:heaps}. Then $q_{s,t}(H_2)=sts$ and $q_{t,u}(H_2)=\epsilon$, which we represent by the diagram \raisebox{-0.3
cm}{\mbox{\includegraphics{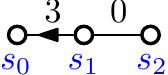}}}.
\end{example}

 Our main automaton on the alphabet $S$ is defined in the following. The important cases of the definition of the transition function are illustrated in Figure~\ref{fig:transitions}, where we use diagrams to represent $(q_e)_e$ as explained for $q(H)$ above.

 \begin{description}
 \item[States]  There is first a {\em sink state} denoted by $\dagger$. All other states are of the form $\left(M,(q_e)_e\right)$ where $M\subseteq S$,  $e$ runs through all pairs $\{s,t\}$ such that $3\leq m_{st}<\infty$ and  $q_e$ is an alternating word in $s$ and $t$ of length $<m_{st}$.
 \item[Initial state] It is given by $Q_0:=\left(\emptyset,(\epsilon)_e\right)$.
 \item[Final States] All states except the sink $\dagger$  are final.
 \item[Transition function] For each state $q$ and each letter $s$, we define a state $q'=\delta(q,s)$ as follows:
\end{description}  
 
\begin{enumerate}
\item In the following three cases, we set $q':=\dagger$:
\begin{enumerate} 
\item $q=\dagger$; \label{it:fail1}
\item $q=\left(M,(q_e)_e\right)$ and $s\in M$; \label{it:fail2}
\item $q=\left(M,(q_e)_e\right)$ and there exists $e=\{s,t\}$ with $3\leq m_{st}<+\infty$ and $q_e=\braidl{st}{m_{st}-1}$.\label{it:fail3}
\end{enumerate}
\item Otherwise we set $q':=\left(M',(q'_e)_e\right)$, with $M':=\{s\}\cup\{t\in M~;st=ts\}$ and $q'_e$ is determined as follows:\label{it:max}
\begin{enumerate}
\item If no endpoint of $e$ is adjacent to $s$, then $q'_e:=q_e$.\label{it:up1}
\item Suppose $s\in e$, say $e=\{s,t\}$.
\begin{enumerate}
\item If $q_e=\braidl{st}{k}$, $k\geq 0$, then $q'_e:=\braidl{sts}{k+1}$.\label{it:up2}
\item If $q_e=[\cdots ts]_{k}$, $k>0$ then $q'_e:=s$.\label{it:up3}
\end{enumerate} 
\item Suppose $s\notin e$ and is adjacent to both endpoints of $e$, then $q'_e:=\epsilon$.\label{it:up4}
\item \label{it:up56} Suppose $s$ is adjacent to exactly one endpoint of $e=\{t,u\}$, say $t$. \begin{enumerate}
\item If $q_e=\braidl{tu}{k}$, $k>0, $ then set $q'_e:=u$ if $u\in M$ while $q'_e:=\epsilon$ if $u\notin M$.\label{it:up5}
\item If $q_e=\braidl{ut}{k}$, $k\geq 0$, then set $q'_e:=\epsilon$ if $k<m_{st}-1$ while $q'_e:=\braidl{ut}{m_{st}-1}$ if $k=m_{st}-1$.\label{it:up6}
\end{enumerate}
\end{enumerate}
\end{enumerate}

A state $q$ is called \emph{accessible} if there is a word $\w$ such that $\delta(q_0,\w)=q$. Obviously the language defined by an automaton is not modified if only its accessible states are kept.

\begin{definition}[Automatn $\Ared(W)$]
\label{defi:Ared}
Given a Coxeter system $(W,S)$, define the automaton $\Ared(W)$ on the alphabet $S$ to be the the automaton defined above restricted to its accessible states.
\end{definition}

\begin{figure}
\includegraphics[width=\textwidth]{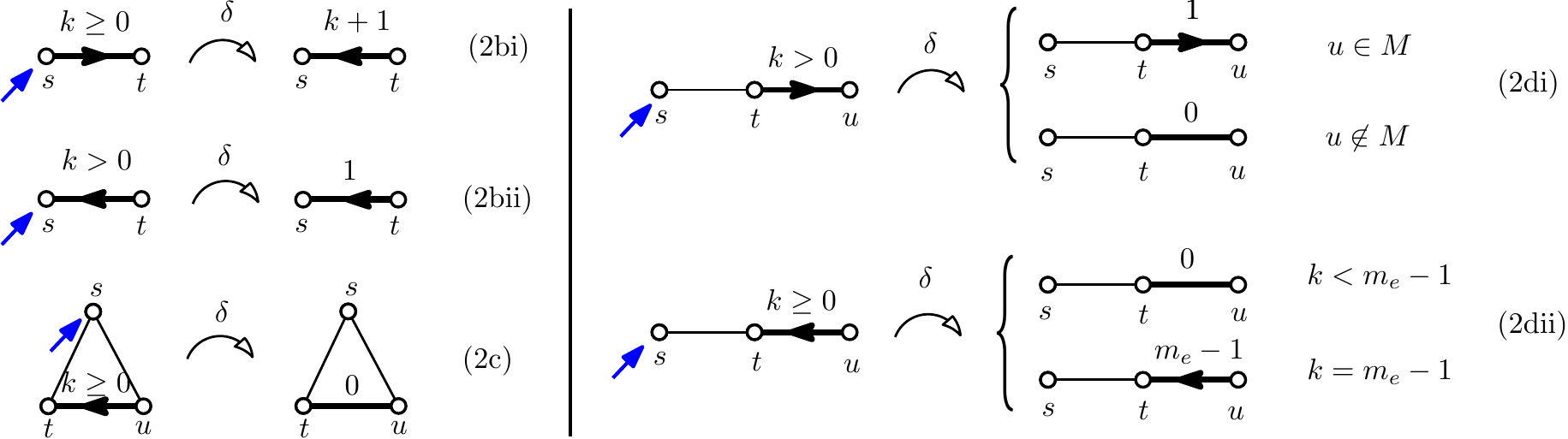}
\caption{The action of $\delta$. The oriented edge is the one being modified, while the letter $s$ to be read is depicted with a small blue arrow.
\label{fig:transitions}}
\end{figure}

For instance, the automaton recognizing $\R(W_0^{FC})$ is illustrated in Figure~\ref{fig:AutRedW0}. It has $19$ states; the sink state is not represented in the picture.

\begin{figure}[!ht]
\includegraphics[width=0.9\textwidth]{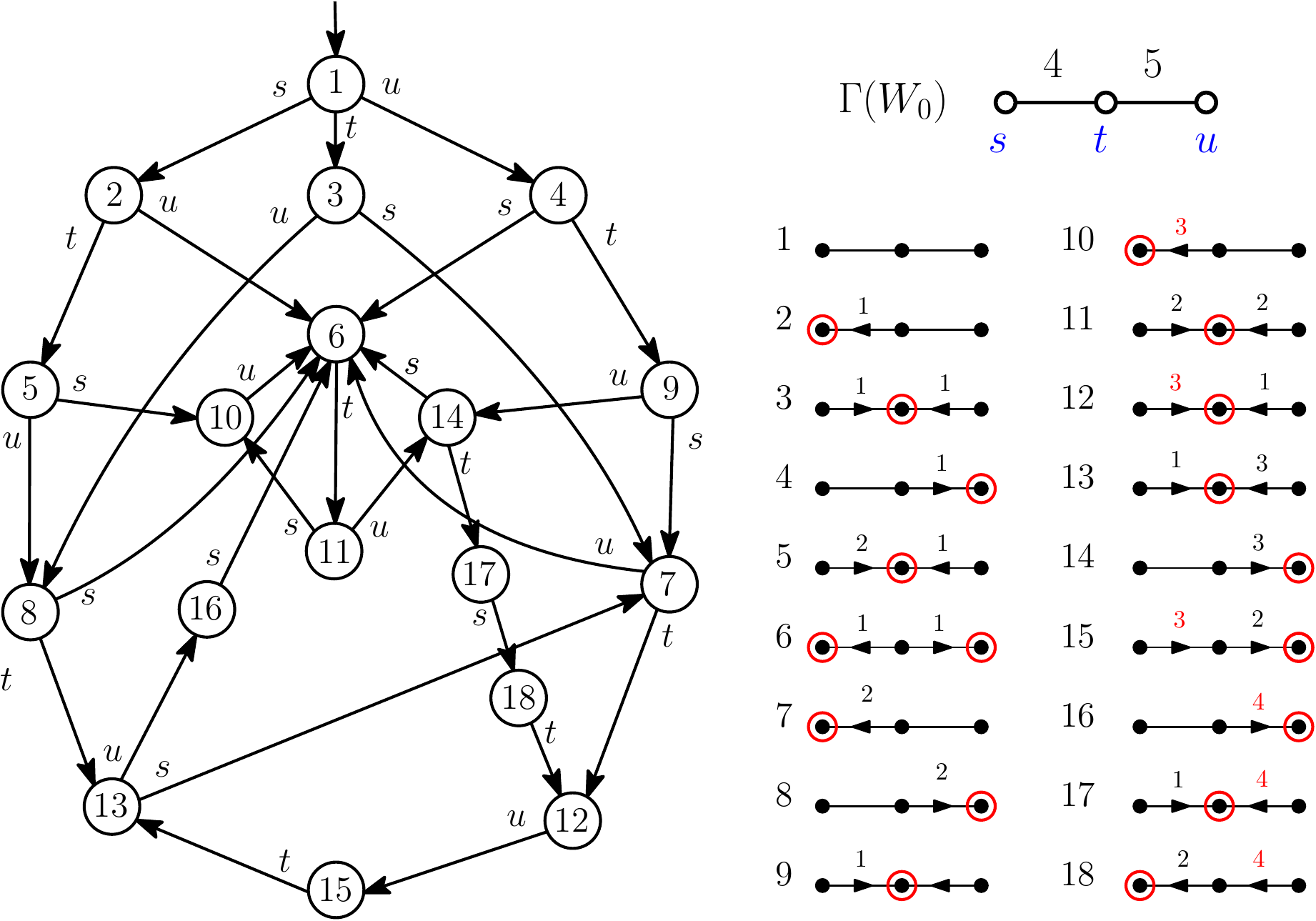}
\caption{Automaton $\Ared(W_0)$ recognizing the language $\R(W_0^{FC})$. All states are final (the sink is not represented.)
\label{fig:AutRedW0}}
\end{figure}

Let us mention two special cases of the automata $\Ared$. 

If $\Gamma(W,S)$ has no finite edges, then $\R(\Wfc)$ consists of words that have no squares $s^2$ in elements of their commutation class. It is easily seen that the knowledge of maxima of the heaps of $\R(\Wfc)$ is enough to recognize the language, which is precisely what $\Ared(W)$ does in this case. 

If $\Gamma(W,S)$ is a complete graph, so that no commutations are possible, then elements of $\Wfc$ are precisely the elements of $W$ which have only one reduced expression. It is then easy to show that $\Ared(W)$ recognizes $\R(\Wfc)$: note in particular that transitions of type~\eqref{it:up56} never occur. In fact it is true in general that for any $(W,S)$, the reduced words for those elements of $W$ with a unique reduced expression form a regular language~\cite{cassweb}.
\medskip

The core of this section is the following lemma:

\begin{lemma}
\label{lem:technical}
 Let $\w$ be a word in $S^*$ and denote by $H$ its heap $\H(\w)$. \\
 $\bullet$  If $H$ is not a FC heap, then $\delta(Q_0,\w)=\dagger$. \\
 $\bullet$  If $H$ is a FC heap, then $\delta(Q_0,\w)=(M,(q_e)_e)$ where $M=\max(H)$ and $q_e=q_e(H)$ for any edge $e$ with $m_e<+\infty$. 
\end{lemma}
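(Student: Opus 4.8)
The plan is to prove Lemma~\ref{lem:technical} by induction on the length of $\w$, since the transition function $\delta$ is built up one letter at a time. The base case $\w=\epsilon$ is immediate: $\H(\epsilon)$ is the empty heap, which is trivially a FC heap with $\max=\emptyset$ and all $q_e=\epsilon$, matching the initial state $Q_0$. For the inductive step, I would write $\w=\w' s$ and consider $H'=\H(\w')$ and $H=\H(\w)=H'\cdot s$. By induction, $\delta(Q_0,\w')$ is either $\dagger$ (if $H'$ is not FC) or the state $(\max(H'),(q_e(H'))_e)$ (if $H'$ is FC). The first subtlety is monotonicity: if $H'$ is already not FC, then neither is $H$ (adding an element cannot repair a forbidden covering relation or convex alternating chain), and since $\delta(\dagger,s)=\dagger$ by case~\eqref{it:fail1}, the sink is correctly preserved. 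So the real content is the case where $H'$ \emph{is} a FC heap.

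Assuming $H'$ is FC, I must show two things. First, that $\delta$ correctly detects whether $H=H'\cdot s$ fails to be FC, i.e.\ that the transition lands in $\dagger$ precisely when $H$ violates (h1) or (h2). Condition (h1) is violated exactly when $s\in\max(H')$ — adding another $s$ on top of an existing maximal $s$ creates a covering relation with equal labels — and this is detected by case~\eqref{it:fail2}, since $M=\max(H')$ by induction. Condition (h2) is violated exactly when appending $s$ completes a convex alternating $st$-chain of length $m_{st}$; by the definition of $q_{s,t}$ (Definition~\ref{defi:qe}), this happens precisely when the stored word $q_e$ for $e=\{s,t\}$ already equals the length-$(m_{st}-1)$ alternating word ending in $s$, namely $\braidl{st}{m_{st}-1}$, which is exactly the trigger for case~\eqref{it:fail3}. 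Here I would need the equivalent reformulation given after Definition~\ref{defi:qe}, that $q_e(H')$ records the maximal alternating suffix whose completion produces a convex alternating chain of full length, so that reading one more $s$ tips it over the edge iff $q_e$ has already reached maximal length with the correct parity.

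Second, and this is where the bulk of the work lies, I must verify that when $H$ \emph{is} FC the new state components are computed correctly: $M'=\max(H)$ and $q'_e=q_e(H)$ for every finite edge $e$. The update $M'=\{s\}\cup\{t\in M: st=ts\}$ is the standard description of how the set of maxima changes when one stacks a new generator $s$: the element $s$ becomes maximal, any previous maximal $t$ commuting with $s$ stays maximal, and any previous maximal $t$ not commuting with $s$ (so $\{s,t\}$ is an edge) is now covered by the new $s$ and drops out. Proving this cleanly is best done through Proposition~\ref{prop:maxdes}, relating $\max$ to right descents, together with the heap operation $H'\mapsto H'\cdot s$. For the $q'_e$ updates, I would go through the cases of step~\eqref{it:max} one at a time, checking in each that the combinatorial rule matches the definition of $q_e(H'\cdot s)$. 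The cases split naturally by the adjacency relationship between $s$ and the endpoints of $e$: cases~\eqref{it:up1}--\eqref{it:up3} handle edges containing or disjoint-and-nonadjacent to $s$, case~\eqref{it:up4} handles $s$ adjacent to both endpoints, and case~\eqref{it:up56} handles $s$ adjacent to exactly one endpoint.

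The main obstacle I anticipate is case~\eqref{it:up56}, where $s$ is adjacent to exactly one endpoint $t$ of $e=\{t,u\}$. Here inserting $s$ can disrupt or reset the alternating $tu$-chain in a way that depends subtly on whether $u$ is currently maximal (encoded by $u\in M$) and on the parity/length $k$ of the stored suffix $q_e$. The rules~\eqref{it:up5} and~\eqref{it:up6} reset $q'_e$ to a short word ($u$, $\epsilon$, or the long alternating word in the boundary subcase $k=m_{st}-1$), and justifying each requires a careful heap-theoretic analysis of how the new $s$ sits relative to the chain $H'_{tu}$ and whether a future completion of a braid on $e$ remains possible. My strategy would be to argue directly from Definition~\ref{defi:qe}: for each subcase, determine the longest alternating suffix of the updated chain $H_{tu}$ whose ``completion'' yields a convex alternating chain of length $m_{tu}$ in some heap $\H(\w s \cdot \braidr{\cdots}{})$, and check it agrees with the stated $q'_e$. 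The convexity requirement in (h2) is what makes this delicate, since the newly added $s$ can block convexity of an otherwise-alternating chain, and tracking exactly when this happens is the technical heart of the proof.
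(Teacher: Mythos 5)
Your proposal follows essentially the same route as the paper's own proof: induction on the length of $\w$, preservation of the sink by monotonicity of non-FC-ness, detection of an (h1) failure via $s\in\max(H)$ and of an (h2) failure via $q_e$ having reached full length $m_{st}-1$ (where the paper invokes Stembridge's Lemma~4.1 of~\cite{St1} for the point you propose to extract from Definition~\ref{defi:qe} and its reformulation), followed by the case-by-case check that the update rules of $\delta$ compute $\max(H')$ and $q_e(H')$. The only difference is one of completion rather than of method: the paper actually carries out the case analysis (a)--(d), including the delicate case where $s$ is adjacent to exactly one endpoint of $e$, which you correctly identify as the technical heart but leave as a plan.
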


We postpone the proof until Section~\ref{sub:prooflemma}, and state the main result of this section.

\begin{theorem}
\label{th:reducedfcregular}
Let $(W,S)$ be a Coxeter system. Then the language $\R(\Wfc)$ is recognized by $\Ared(W)$.
\end{theorem}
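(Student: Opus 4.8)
The plan is to derive Theorem~\ref{th:reducedfcregular} as a near-immediate consequence of the technical Lemma~\ref{lem:technical}, whose statement exactly packages the invariant that the automaton maintains. The key conceptual point is the bijective dictionary established in Section~\ref{sub:heaps} and Proposition~\ref{prop:bij_fc_elements_heaps}: a word $\w\in S^*$ is a reduced expression of an FC element if and only if its heap $H=\H(\w)$ is an FC heap (satisfies (h1) and (h2)). So the target language $\R(\Wfc)$ is precisely $\{\w\in S^*\,:\,\H(\w)\text{ is an FC heap}\}$. I would open the proof by recording this reformulation explicitly, since it converts the theorem into a statement purely about which words have FC heaps.

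Next I would simply unwind the definition of acceptance. A word $\w$ is accepted by $\Ared(W)$ exactly when $\delta(Q_0,\w)\in F$, and by the design of the automaton $F$ consists of all states except the sink $\dagger$. Now invoke Lemma~\ref{lem:technical}: its two bullets give a complete dichotomy. If $H=\H(\w)$ is not an FC heap, then $\delta(Q_0,\w)=\dagger\notin F$, so $\w$ is rejected. If $H$ is an FC heap, then $\delta(Q_0,\w)=(M,(q_e)_e)$ for some genuine (non-sink) state, hence $\delta(Q_0,\w)\in F$ and $\w$ is accepted. Combining with the reformulation above, $\w$ is accepted if and only if $\H(\w)$ is an FC heap if and only if $\w\in\R(\Wfc)$, which is exactly the claim $L(\Ared(W))=\R(\Wfc)$.

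One subtlety I would address is the passage from the automaton ``defined above'' to the restricted automaton $\Ared(W)$ of Definition~\ref{defi:Ared}. The restriction to accessible states does not change the recognized language, as already observed in the text right before the definition (only accessible states can ever be reached from $Q_0$, so they alone determine acceptance). I would state this explicitly so that applying Lemma~\ref{lem:technical}, which is phrased for the full automaton with initial state $Q_0$, transfers without loss to $\Ared(W)$.

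Since essentially all the combinatorial content is carried by Lemma~\ref{lem:technical}, I do not expect the theorem's own proof to present any real obstacle; it is a formal consequence. The genuine difficulty is entirely deferred to the verification of Lemma~\ref{lem:technical} in Section~\ref{sub:prooflemma}—in particular, checking by induction on the length of $\w$ that each transition rule \eqref{it:up1}–\eqref{it:up6} correctly updates both $\max(H)$ and every $q_e(H)$ when a letter is appended, and that the failure rules \eqref{it:fail2}, \eqref{it:fail3} fire exactly when (h1) or (h2) would first be violated. That induction, and especially the case analysis in the mixed transition~\eqref{it:up56} where $s$ is adjacent to only one endpoint of the edge $e$, is where all the work lives; the theorem above is merely the clean statement that this bookkeeping succeeds.
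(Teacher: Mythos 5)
Your proposal is correct and follows exactly the paper's own route: both derive the theorem as an immediate consequence of Lemma~\ref{lem:technical} combined with the identification (via Lemma~\ref{lem:FCcommclasses} and Proposition~\ref{prop:bij_fc_elements_heaps}) of $\R(\Wfc)$ with the set of words whose heaps are FC heaps. Your additional remarks on the accessibility restriction and on where the real work lies are accurate but do not change the argument, which matches the paper's two-line proof.
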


\begin{proof}
By Lemma~\ref{lem:technical}, a word $\w$ is recognized by the automaton $\Ared$ if and only if its heap is a FC heap. But this is the case precisely if $\w$ is a reduced expression for a FC element, which completes the proof.
\end{proof}

Before we show how to go from reduced expressions to elements, we make two remarks.

\begin{remark}
\label{rem:canaut}
Recall that the complete set of reduced words $\R(W)$ is also known to be regular, see~\cite[Theorem 4.8.3]{BjorBrenbook}. The automaton $\A_{sm}(W)$ recognizing it uses the standard geometric representation of $W$ and properties of the associated roots: the key is that the set of so-called {small roots} is finite, and the states of $\A_{sm}(W)$ are subsets of small roots. 
Now in all the cases we computed, it seems that $\A_{sm}(W)$ is a larger automaton than $\Ared(W)$. It is then natural to wonder if some sub-automaton of $\A_{sm}(W)$ actually recognize the subset $\R(\Wfc)$ of $\R(W)$. More precisely, does there exist a subset of the edges of $\A_{sm}(W)$ such that the successful paths avoiding these edges correspond precisely to the elements of $\R(\Wfc)$?
\end{remark}

\begin{remark}
\label{rem:minimality} Automata theory tells us there exists a unique automaton with the smallest number of states recognizing a regular language $L$, the {\em minimal automaton} of $L$. This question is investigated in~\cite{HNW} for the language $\R(W)$ and the automaton $\A_{sm}(W)$ mentioned in Remark~\ref{rem:canaut}.

We can ask the same question regarding $\R(\Wfc)$. Some limited computer experimentations (cf. Remark~\ref{rem:implementation}) tend to show that $\Ared(W)$ is always close to being minimal, and minimal in numerous cases.
\end{remark}

\subsection{From words to elements}
\label{sub:wordstoelements}
We showed that reduced words of FC elements form a regular language. We are now interested in FC elements of $W$ themselves, and our strategy will be to pick for each $w\in \Wfc$ a reduced word of $w$ (thus having $\ell(w)$ letters) in order to obtain a regular language.

 For the rest of this section we fix a total order $<$ of $S$, which induces the lexicographical order on $S^*$: $\w_1<\w_2$ if either $\w_1$ is a strict prefix of $\w_2$ or $\w_1=\w s\w'$, $\w_2=\w t\w"$ and $s<t$ in $S$. For each $w\in W$ we let $\min(w)$ be the lexicographically smallest element of $\R(w)$. 
 
 We finally define $\mathcal{L}(W,S,<)=\{\min(w)\,:\,w\in \Wfc\}\subseteq S^*$.

\begin{theorem}
\label{th:Wfcregular}
For any Coxeter system $(W,S)$ and any total order $<$ on $S$, the language $\mathcal{L}(W,S,<)$ is regular.
\end{theorem}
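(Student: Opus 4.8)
The goal is to prove that $\mathcal{L}(W,S,<)$, the set of lexicographically minimal reduced expressions for FC elements, is regular. We already know from Theorem~\ref{th:reducedfcregular} that $\R(\Wfc)$ is regular. The plan is to show that selecting the lex-minimal representative from each commutation class is a regularity-preserving operation, or more precisely that $\mathcal{L}(W,S,<) = \R(\Wfc) \setminus N$, where $N$ is the set of reduced FC words that are \emph{not} lex-minimal in their class, and that $N$ is itself regular. Since regular languages are closed under intersection and complementation (as recalled in the excerpt), it then suffices to exhibit $N$ as regular, giving $\mathcal{L}(W,S,<) = \R(\Wfc) \cap N^c$.

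\medskip

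The key characterization I would use is this: a word $\w \in \R(\Wfc)$ fails to be lex-minimal in its commutation class precisely when it contains, somewhere, a factor of the form $t \w' s$ where $s < t$, the letters $s$ and $t$ commute (i.e.\ $\{s,t\}$ is not an edge of $\Gamma$, so $m_{st} = 2$), and $s$ commutes with every letter of $\w'$. In that situation one may slide $s$ leftward past $t$ (and past all of $\w'$) to obtain a lexicographically smaller word in the same commutation class. Conversely, if no such configuration occurs, then $\w$ admits no single commutation move (nor composite sequence of moves) producing a smaller word, hence it is lex-minimal. The standard fact here is that a word is lex-minimal in its commutation class if and only if it admits no \emph{immediate} left-shift of a letter past a larger letter through a block of commuting letters; this is the classical ``normal form'' criterion for trace monoids (heaps/commutation classes), and I would invoke it as the characterization of lex-minimality.

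\medskip

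First I would make the above criterion precise and verify the equivalence using the heap picture of Section~\ref{sub:heaps}: commutation classes correspond to heaps, and a lex-minimal linear extension corresponds to greedily choosing the smallest available label among the minimal elements of the heap at each step. Then I would construct a finite automaton $\mathcal{B}$ recognizing $N$, the set of non-lex-minimal words. The natural approach is to detect a forbidden pattern: $\mathcal{B}$ scans the word and needs to recognize the existence of a letter $t$ followed later by a strictly smaller commuting letter $s$, with all intervening letters commuting with $s$. A state can track, for the current prefix, the set of ``pending'' letters $t$ that are still candidates to be overtaken — that is, for each generator we record whether it has appeared and whether everything seen since commutes with some smaller generator. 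Since $S$ is finite, there are finitely many such states, so $\mathcal{B}$ is a finite automaton and $N$ is regular. Taking $N^c$ and intersecting with $\R(\Wfc)$ via the product automaton construction yields $\mathcal{L}(W,S,<)$ as regular.

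\medskip

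The main obstacle I anticipate is getting the lex-minimality criterion exactly right and proving the converse direction rigorously: it is easy to see that a forbidden factor lets one decrease the word, but showing that the \emph{absence} of any such local forbidden factor guarantees global lex-minimality requires the trace-monoid normal form theory, namely that any two words in a commutation class differ by a sequence of adjacent transpositions of commuting letters, and that the lex-minimal one is reached by repeatedly performing only ``beneficial'' local swaps with no possibility of getting stuck at a non-minimal local optimum. I would handle this by appealing to the confluence of the commutation-class rewriting system (adjacent commuting swaps), which guarantees a unique lex-minimal normal form reachable by leftward shifts, so that the local obstruction I track in the automaton is both necessary and sufficient. Encoding precisely which bounded information the automaton must carry — enough to certify ``$s$ commutes with all letters since the last occurrence of some larger $t$'' — is the delicate bookkeeping step, but it is finite-state by construction and does not present a conceptual difficulty beyond careful definition.
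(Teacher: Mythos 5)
Your proposal is correct, and its skeleton is the same as the paper's: write $\mathcal{L}(W,S,<)$ as the intersection of $\R(\Wfc)$ (regular by Theorem~\ref{th:reducedfcregular}) with the language of words that are lexicographically minimal in their $\Gamma$-commutation class, then use closure of regular languages under complementation and intersection. The one real difference is how the second language is shown regular: the paper simply cites the theorem of Anisimov and Knuth~\cite{AnisimovKnuth} (together with Kleene's theorem) to get regularity of $Shortlex(\Gamma)$, whereas you open that black box and re-prove it. Your forbidden-factor criterion --- $\w$ is non-minimal in its class iff it contains a factor $t\w's$ with $s<t$, $m_{st}=2$, and $s$ commuting with every letter of $\w'$ --- is precisely the content of the cited result, and your converse argument is the standard correct one: via heaps, the lex-minimal linear extension is the greedy one (always read the smallest available minimal label), so a failure of greediness exhibits a smaller available letter whose first later occurrence produces exactly such a factor; this makes your appeal to ``confluence'' of the rewriting system unnecessary, since the heap argument goes directly from non-minimality to the local pattern without discussing sequences of swaps. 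Your pattern-detecting automaton is also essentially the automaton for $S^*\setminus Shortlex(\Gamma)$ that the paper's own implementation constructs from~\cite{AnisimovKnuth}, see Remark~\ref{rem:implementation}; for the bookkeeping, one bit per generator suffices (for each $s\in S$, record whether some $t>s$ with $m_{st}=2$ has occurred with all subsequent letters commuting with $s$), giving at most $2^{|S|}+1$ states. In short: the paper's citation buys brevity and modularity, while your version buys a self-contained, fully constructive proof that needs no trace-monoid literature and coincides with what one must implement anyway to compute $\Wfc(t)$.
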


Theorem~\ref{th:rationality} is an immediate corollary by using Proposition~\ref{prop:regulartoseries} and the fact that $\mathcal{L}(W,S,<)$ has generating function $\Wfc(t)$.

\begin{proof}
   Associate to each $\Gamma$-commutation class $\C\subseteq S^*$ its lexicographically minimal element $\min(C)$, and let $Shortlex(\Gamma)$ be the language of all $\min(\C)$. We claim that $\mathcal{L}(W,S,<)=Shortlex(\Gamma)\cap \R(\Wfc)$. Indeed $\mathcal{L}(W,S,<)$ is clearly included in this intersection. Now take a reduced word $\w$ in $\R(\Wfc)$, so it represents an element $w\in\Wfc$ for which $\R(w)$ is a $\Gamma$-commutation class. If $\w\in Shortlex(\Gamma)$, then $\w=\min(\R(w))=\min(w)$ and thus is in $\mathcal{L}(W,S,<)$, which proves the claim.
 
 Now by a result of Anisimov and Knuth~\cite{AnisimovKnuth}, the language $Shortlex(\Gamma)$ can be represented by a rational expression; by the celebrated theorem of Kleene, this means that $Shortlex(\Gamma)$ is a regular language. Since $\R(\Wfc)$ is regular by Theorem~\ref{th:reducedfcregular}, we have written $\mathcal{L}(W,S,<)$ as the intersection of two regular languages, and so it is itself regular.
\end{proof}

\begin{remark}
In the previous proof, one can replace $Shortlex(\Gamma)$ by the language $\operatorname{MinRed}$ obtained by picking for each $w\in W$ the lexicographically smallest reduced expression of $w$. Indeed it was proved in~\cite{BrinkHowlett} that $\operatorname{MinRed}$ is a regular language, and clearly $\mathcal{L}(W,S,<)=\operatorname{MinRed}\cap \R(\Wfc)$. However the proof that it is regular is much more complicated than for $Shortlex(\Gamma)$.
\end{remark}

\begin{remark}
\label{rem:implementation}
We were able to fully implement the computation of $\Wfc(t)$: given a Coxeter graph $\Gamma(W,S)$, our program outputs a rational function $P(t)/Q(t)$ representing $\Wfc(t)$. It was done in \textsc{Sage}~\cite{sage} using the module on automata.

 The implementation follows closely the proof of Theorem~\ref{th:rationality}. First, one constructs the automaton $\Ared(W)$ recognizing $\R(\Wfc)$, by using the definition of $\delta$ to construct the states and transitions inductively. Then, one constructs the automaton recognizing $Shortlex(\Gamma)$, via a nondeterministic automaton recognizing $S^*\setminus Shortlex(\Gamma)$ built using the result of~\cite{AnisimovKnuth}. Then the product automaton recognizing $\mathcal{L}$ is constructed, and from this one can compute the desired rational function as sketched in the proof of Proposition~\ref{prop:regulartoseries}.
\end{remark}

\subsection{Proof of Lemma~\ref{lem:technical}}
\label{sub:prooflemma}

The proof goes by induction on the length of $\w$. It is obviously true for $\w=\epsilon$ by definition of $Q_0$.
\smallskip

Now we assume the claim holds for a word $\w$, and we will show it for  $\w':=\w s$ where $s$ is any letter. We shall denote by $H$ and $H'$ the heaps of the words $\w$ and $\w'$ respectively.
\smallskip

$\bullet$ Assume first that $H'$ is not a FC heap. If $H$ is not FC either, then by induction $\delta(q_0,\w)=\dagger$ so that $\delta(q_0,\w s)=\delta(\dagger,s)=\dagger$ by ~\eqref{it:fail1} as wanted.

 If $H$ is FC, then we use Proposition~\ref{prop:bij_fc_elements_heaps}: $H$ satisfies conditions (h1) and (h2) while $H'$ must fail at least one of the two. If $H'$ fails (h1), then necessarily  $s\in \max(H)$ since $H$ itself satisfies (h1). Now by induction $M=\max(H)$ so we obtain $\delta(q_0,\w s)=\delta((M,(q_e)_e),s)=\dagger$ by~\eqref{it:fail2}. If $H'$ fails (h2), then by~\cite[Lemma 4.1]{St1}, there exists $t$ in $S$ with $3\leq m_{st}<+\infty$ such that $\braidl{st}{m_{st}-1}$ is a convex chain in $H$ and $\braidl{st}{m_{st}}$ is a convex chain in $H'$; in particular $q_{st}(H)=\braidl{st}{m_{st}-1}$. By induction $q_{st}=\braidl{st}{m_{st}-1}$ and thus
  $\delta(q_0,\w s)=\delta((M,(q_e)_e),s)=\dagger$ by~\eqref{it:fail3}.
\smallskip

$\bullet$ Assume now that $H'$ is FC. $H$ is necessarily FC, and by induction $\delta(Q_0,\w)=(M,(q_e)_e)$ where $M=\max(H)$ and $q_e=q_e(H)$ for any edge $e$. 

We need to prove that $\left(M',(q'_e)_e\right):=\delta((M,(q_e)_e),s)$ is such that $M'=\max(H')$ and $q'_e=q_e(H')$ for any edge $e$ with finite label.\smallskip

 By the definition of heaps, $H'$ is obtained from $H$ by adding an element $x_s$ labeled $s$, relations $v<x_s$ if the label $t$ of $v$ satisfies $m_{st}\geq 3$, and then take the transitive closure. Therefore $\max(H')$ is composed of $s$ and the elements of $\max(H)$ such that $m_{st}=2$, which is exactly how $M'$ is defined in~\eqref{it:max}.

Now let $e$ be an edge of $\G$ with $m_e<+\infty$. We distinguish the same sub-cases as we did in Case~\eqref{it:max} in the definition of $\delta$:
\begin{enumerate}[label=(\alph*)]
\item If no endpoint of $e$ is adjacent to $s$, then $x_s$ has no influence on the convexity of the chain $H_e$ and so $q_e(H')=q_e(H)$.
\item We now assume $s\in e$, say $e=\{s,t\}$.
\begin{enumerate}[label=(\roman*)]
\item If $q_e(H)=\braidl{st}{k}$, then adding $x_s$ to $H$ will by definition extend the corresponding alternating convex chain, and so $q_e(H')=\braidl{sts}{k+1}$.\item If $q_e(H)=\braidl{ts}{k}$, then clearly $q_e(H')=s$ corresponding to the $1$-element chain $x_s$.
\end{enumerate}
\item If $s$ is adjacent to both endpoints of $e$, then the alternating convex chain determined by $q_e(H)$ cannot be extended in $H'$, because the new element prevents convexity; therefore $q_e(H')=\epsilon$.
\item The remaining case is when $s$ is adjacent to one endpoint of $e$, say $e=\{t,u\}$ and $s$ is adjacent to $t$ and not to $u$.
\begin{enumerate}[label=(\roman*)]
\item If $q_e(H)=\braidl{tu}{k}$ then, when $k>1$, the addition of $x_s$ will prevent the extension of $q_e(H)$, since any later addition of $t$ will make the corresponding chain non convex. From this we get that if $u\in\max(H)$, then $u\in\max(H')$  so $q_e(H')=u$; if $u\in\max(H)$ then $q_e(H')=\epsilon$
\item If $q_e(H)=\braidl{tu}{k}$, then the corresponding convex alternating chain in $H'$ can be extended by one $t$-element. However it cannot be extended by two elements (labeled $t$ and $u$) because $x_s$ then prevents convexity. So if $k=m_e-1$ we get $q_e(H')=q_e(H)$, while if $k<m_e-1$ we are reduced to $q_e(H')=\epsilon$.
\end{enumerate}
\end{enumerate}

By inspection, we have indeed $q_e(H')=q'_e$ in all cases, which completes the induction step, and the proof of Lemma~\ref{lem:technical}.

\section{Ultimately periodic families of FC elements}
\label{sec:classification}

In this section we  classify Coxeter groups $W$ whose associated FC length counting sequence $\left(\left|\Wfc_l\right|\right)_{l\geq 0}$ is ultimately periodic; we call \emph{FC-periodic} such a group.
 This is a natural extension of Stembridge's result~\cite{St1} who classified \emph{FC-finite} Coxeter groups, \emph{i.\,e.} with a {\em finite} number of FC elements: the result is the family (PF), illustrated in Figure~\ref{fig:FC_Periodic_Graphs}.

We can focus on irreducible groups, thanks to the following lemma.

\begin{lemma}
Let $W=W_1\times \cdots\times  W_k$ be written as a product of irreducible Coxeter groups. Then $W$ is FC-periodic iff all factors $W_i$ are FC-finite except possibly one of them which is FC-periodic.
\end{lemma}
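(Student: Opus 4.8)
The plan is to reduce everything to the multiplicativity of the generating function $\Wfc(t)$ over a direct product, and then analyze when a product of power series yields an ultimately periodic coefficient sequence. The starting observation is that if $W=W_1\times\cdots\times W_k$, then an element $w$ is FC in $W$ if and only if each of its components $w_i$ is FC in $W_i$, and moreover $\ell(w)=\sum_i\ell(w_i)$. This is because the Coxeter graph $\Gamma(W)$ is the disjoint union of the graphs $\Gamma(W_i)$, so generators from different factors commute and never share a braid relation; hence a heap on $\Gamma(W)$ is literally a disjoint union of heaps on the $\Gamma(W_i)$, and it satisfies (h1) and (h2) precisely when each piece does. By Proposition~\ref{prop:bij_fc_elements_heaps} this gives $\Wfc\cong \Wfc[W_1]\times\cdots\times\Wfc[W_k]$ as a length-additive bijection, and therefore
\begin{equation*}
\Wfc(t)=\prod_{i=1}^k \Wfc[W_i](t).
\end{equation*}

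The problem is thus purely about products of power series with nonnegative integer coefficients. The key dichotomy I would invoke is: a sequence $(a_l)$ with nonnegative coefficients is ultimately periodic exactly when its generating function is a rational function all of whose poles lie on the unit circle and are simple (equivalently, the denominator is, up to a polynomial factor absorbed by cancellation, a product of cyclotomic-type factors $1-t^d$), while $W_i$ being FC-finite corresponds to $\Wfc[W_i](t)$ being a polynomial. So the plan is to prove two directions. For the ``if'' direction, suppose all factors but one are FC-finite and the remaining factor $W_j$ is FC-periodic: then $\Wfc(t)=P(t)\cdot \Wfc[W_j](t)$ where $P(t)=\prod_{i\neq j}\Wfc[W_i](t)$ is a polynomial, and multiplying an ultimately periodic sequence's generating function by a polynomial keeps it ultimately periodic (a polynomial times a rational function with only simple unit-circle poles still has only simple unit-circle poles). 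For the ``only if'' direction, I would argue the contrapositive: if two distinct factors are FC-infinite, or one factor grows exponentially, the product cannot be ultimately periodic.

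The main obstacle is the ``only if'' direction, and I expect it to split into two cases requiring slightly different arguments. First, if some factor $W_j$ is neither FC-finite nor FC-periodic, then by the forthcoming classification (Theorem~\ref{th:classification_intro}) its sequence grows exponentially, so $\Wfc[W_j](t)$ has a pole strictly inside the unit disk; since all the factor series have nonnegative coefficients, no cancellation can remove this pole in the product, and a rational function with a pole of modulus $<1$ has exponentially growing coefficients, hence is not ultimately periodic. Second, and this is the subtler point, if two distinct factors $W_j,W_k$ are both FC-periodic but FC-infinite, each contributes a pole at $t=1$ (a genuine pole, since an infinite FC-periodic sequence has a nonzero average and hence a simple pole at $t=1$); the product then has a pole of order at least two at $t=1$, which forces the coefficients to grow at least linearly and thus rules out ultimate periodicity. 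The delicate part is establishing that an FC-infinite periodic factor really does have a simple pole at $t=1$ with positive residue, so that these poles genuinely accumulate in order under multiplication; this I would justify by noting that ultimate periodicity with infinitely many nonzero terms forces the averaged density $\lim_{L}\frac{1}{L}\sum_{l<L}|\Wfc[W_i]_l|$ to be strictly positive, which is exactly the residue at $t=1$.

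Finally I would assemble these pieces: the ``if'' direction handles the stated admissible configuration, and the two cases of the ``only if'' direction exhaust the remaining possibilities (either an exponentially growing factor, or at least two infinite periodic factors), in each case contradicting ultimate periodicity. A small bookkeeping point worth stating explicitly is that an FC-finite factor contributes a polynomial, which is harmless, so any number of FC-finite factors may be combined with at most one FC-periodic factor, matching the statement exactly.
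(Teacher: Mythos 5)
Your proof is correct, and its foundation --- the length-additive bijection between $\Wfc$ and $W_1^{FC}\times\cdots\times W_k^{FC}$, giving $\Wfc(t)=\prod_{i} W_i^{FC}(t)$ --- is exactly the ``easy decomposition'' that the paper's one-sentence proof invokes. Where you genuinely diverge is in the ``only if'' direction: to rule out a factor that is neither FC-finite nor FC-periodic, you appeal to the classification theorem (Theorem~\ref{th:classification_intro}). This is not circular, because that theorem is proved (as Theorem~\ref{th:classification}) for irreducible groups only, its proof never uses the present lemma, and your factors $W_i$ are irreducible by hypothesis; but it does invert the paper's logical order, since the lemma is placed before the classification precisely to explain why one may restrict to irreducible groups, and in your scheme it could only be proved after Section~\ref{sec:classification} is complete. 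Note also that this appeal is load-bearing in your case split: your case (ii) (two infinite factors, double pole at $t=1$) assumes those factors are FC-periodic, which you only know because case (i) has already disposed of non-periodic factors via the classification.

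The dependence is avoidable, which is presumably what the author means by ``easily''. (a) Prefixes of reduced words of FC elements are again FC: if some reduced word of the prefix contained a braid factor, appending the remaining letters would give a reduced word of the whole element containing that factor. Hence an FC-infinite irreducible factor has FC elements of \emph{every} length, so if two factors are FC-infinite then $|\Wfc_l|\geq l+1$ for all $l$, ruling out ultimate periodicity with no spectral analysis and no classification. (b) If exactly one factor $W_j$ is FC-infinite and $W$ is FC-periodic, then $W_j^{FC}(t)=\Wfc(t)/P(t)$ with $P$ a polynomial of constant term $1$ and $\Wfc(t)$ rational (by periodicity), so the coefficients of $W_j^{FC}(t)$ form a bounded, nonnegative integer sequence satisfying a linear recurrence; by pigeonhole on the finitely many possible vectors of consecutive values, such a sequence is ultimately periodic. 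One last nitpick: in the equivalence you state at the outset, ``simple poles on the unit circle'' should be ``simple poles at roots of unity''; as you use it, the slip is harmless.
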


The proof follows easily from the easy decomposition $W^{FC}=W_1^{FC}\times \cdots \times W_k^{FC}$. Thanks to this lemma, we focus on irreducible groups from now on.

\subsection{Classification}
Consider the family $\mathcal{P}:=(PF)\cup (PA)\cup (PE)$ of irreducible Coxeter groups given by the union of the following three families, see Figure~\ref{fig:FC_Periodic_Graphs}:
\begin{enumerate}
\item[(PF)] \label{fam1} Irreducible FC-finite Coxeter groups;
\item[(PA)] \label{fam2} Irreducible affine Coxeter groups different from $\widetilde{E}_8$ and $\widetilde{F}_4$;
\item[(PE)] \label{fam3} Exceptional groups of types $Z_3$ and $Z_6$.
\end{enumerate}

The affine types $\widetilde{E}_8$ and $\widetilde{F}_4$ are excluded from (PA) since they already occur in (PE) under the names $E_9$ and $F_5$. Note that the index for a family in (PF) gives the number of generators of $W$, while in (PA) it corresponds to this number minus one.

\begin{figure}[!ht]
\includegraphics[width=\textwidth]{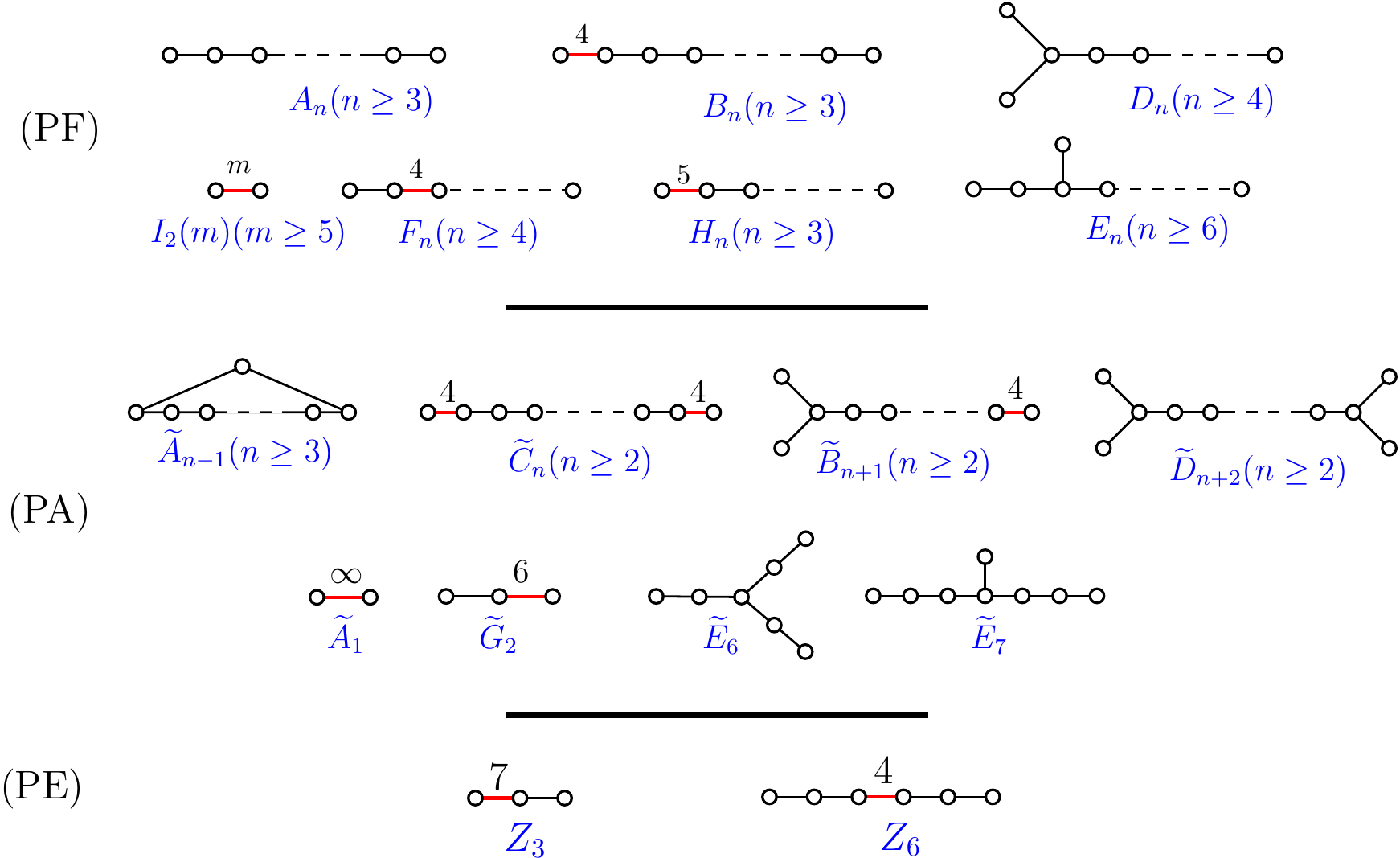}
\caption{The Coxeter diagrams of the family $\mathcal{P}$.
\label{fig:FC_Periodic_Graphs}}
\end{figure}

We can now restate Theorem~\ref{th:classification_intro}.

\begin{theorem}[Classification]
\label{th:classification}
 Let $W$ be an irreducible Coxeter group. Then $W$ is FC-periodic if and only if $W$ has type in $\mathcal{P}$. If $W$ is not FC-periodic, then $(|\Wfc_l|)_l$ grows exponentially fast.
\end{theorem}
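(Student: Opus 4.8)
The plan is to prove the two directions separately, with the easy direction being that every group in $\mathcal{P}$ is FC-periodic, and the hard direction being that every irreducible group outside $\mathcal{P}$ grows exponentially. For the first direction, the three subfamilies are handled in turn: groups in (PF) are FC-finite, so $(|\Wfc_l|)_l$ is eventually zero and trivially ultimately periodic; groups in (PA) are the irreducible affine types (other than $\widetilde{E}_8,\widetilde{F}_4$), for which ultimate periodicity is exactly the main theorem of~\cite{BJN}; and the two exceptional types $Z_3$ and $Z_6$ in (PE) can be checked directly. Since Theorem~\ref{th:rationality} gives a constructive algorithm computing $\Wfc(t)$ as a rational function, I would simply run the implementation of Remark~\ref{rem:implementation} on $Z_3$ and $Z_6$ and verify that the denominator $Q(t)$ of $\Wfc(t)=P(t)/Q(t)$ has all its roots on the unit circle (equivalently, is a product of cyclotomic polynomials up to the factor controlling the eventual zero coefficients), which is precisely the condition for ultimate periodicity of the coefficient sequence.

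For the hard direction I would exploit a \emph{minimality} argument with respect to a partial order on Coxeter groups that is compatible with FC-length. The idea is to define an order in which $W' \leq W$ whenever $\Gamma(W')$ embeds into $\Gamma(W)$ as a labelled subgraph (with labels allowed to decrease appropriately), such that an embedding induces a length-preserving injection $\Wfc(W') \hookrightarrow \Wfc(W)$; consequently exponential growth of $\Wfc(W')$ forces exponential growth of $\Wfc(W)$. Then it suffices to identify the finitely many \emph{minimal} irreducible types that lie outside $\mathcal{P}$, i.e. the minimal ``forbidden'' configurations, and establish exponential growth for each of them. Every irreducible $W\notin\mathcal{P}$ then dominates one of these minimal types and hence also grows exponentially.

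For each minimal forbidden type there are two available tools. The first is direct: exhibit an explicit infinite subfamily of FC heaps whose number of length-$l$ members grows exponentially. Concretely, one finds two ``gadget'' reduced FC words $u,v$ that can be freely concatenated in any order without creating a braid factor or a repeated-label covering relation, so that all $2^k$ words in $\{u,v\}^k$ give distinct FC elements; this yields at least $2^{k}$ FC elements of length $k\cdot\max(|u|,|v|)$ and hence exponential growth. This approach is robust when the Coxeter graph contains, for instance, a long path or a branch allowing two independent ``reservoirs'' of generators. The second tool is computational: for a minimal type where no clean gadget is evident, I would invoke Theorem~\ref{th:rationality} and the \textsc{Sage} implementation to compute $\Wfc(t)=P(t)/Q(t)$ explicitly, and then check that $Q(t)$ has a root strictly inside the unit disk (equivalently, $1/Q$ has a pole of modulus $<1$), which forces the coefficients $|\Wfc_l|$ to grow exponentially and in particular rules out ultimate periodicity.

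The main obstacle is the combinatorial heart of the second direction: correctly defining the order on Coxeter graphs so that it is genuinely \emph{compatible with FC-length} (the induced map on FC elements must be injective and length-preserving, which requires care with edge labels and with the ``no braid factor'' condition), and then carrying out the finite-but-delicate classification of the minimal forbidden types. Getting the minimal list right is the crux: one must argue that the graphs excluded by being affine, FC-finite, or equal to $Z_3,Z_6$ are precisely avoided, so that any irreducible graph not of these forms contains one of the chosen minimal exponential-growth configurations as a subobject. I expect the bulk of the work to lie in enumerating these minimal cases and, for the handful that resist a direct gadget construction, in confirming via the explicit rational function that the dominant pole has modulus less than one.
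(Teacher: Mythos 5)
Your proposal matches the paper's proof in both structure and substance: the easy direction is handled exactly as in the paper (FC-finiteness for (PF), the result of~\cite{BJN} for (PA), and the rational-function computation for $Z_3$, $Z_6$), and the hard direction uses the same monotone order on Coxeter graphs (the paper's Lemma~\ref{lemma:monotony}), the same reduction to a finite list of minimal forbidden types (Lemma~\ref{lemma:mingraphs}), and the same two tools --- explicit exponential families of FC words and the computed dominant pole of $\Wfc(t)$ --- to dispose of each minimal type (Lemma~\ref{lemma:unboundedgraphs}). This is essentially the paper's argument, with the remaining work being the explicit enumeration of the minimal family $\mathcal{M}$ and the case-by-case verification.
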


One implication in this theorem follows from the following result.

\begin{proposition}
\label{prop:periodic_families}
 If $W$ has type in $\mathcal{P}$, then $W$ is FC-periodic.
\end{proposition}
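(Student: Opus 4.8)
The plan is to treat the three subfamilies $(PF)$, $(PA)$ and $(PE)$ separately, since $\mathcal{P}$ is by definition their union and each calls for a different argument.

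First, the family $(PF)$ is immediate. By definition an FC-finite group satisfies $|\Wfc|<\infty$, so $|\Wfc_l|=0$ for every $l$ exceeding the maximal length of an FC element. A sequence that is eventually $0$ is ultimately periodic (of period $1$), so every group in $(PF)$ is FC-periodic. Second, the family $(PA)$ consists of the irreducible affine Coxeter groups other than $\widetilde{E}_8$ and $\widetilde{F}_4$, and for these ultimate periodicity is precisely the main theorem of~\cite{BJN} recalled in the introduction. Hence no new work is required: each group of type $(PA)$ is FC-periodic by direct appeal to that result.

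The substantive case is $(PE)$, which reduces to the two explicit groups $Z_3$ and $Z_6$. Here the strategy is to invoke the \emph{effective} form of Theorem~\ref{th:rationality}: running the implementation described in Remark~\ref{rem:implementation} produces, for each of $Z_3$ and $Z_6$, an explicit rational generating function $\Wfc(t)=N(t)/D(t)$ in lowest terms. To pass from rationality to periodicity I would recall the standard criterion that an integer-valued sequence is ultimately periodic if and only if its generating function is rational with all poles simple and located at roots of unity; concretely, it suffices to check that the reduced denominator $D(t)$ divides $1-t^d$ for some integer $d\geq 1$ (equivalently, that $D(t)$ is a product of distinct cyclotomic polynomials). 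The final step is therefore the explicit verification, for $Z_3$ and $Z_6$, that the computed $D(t)$ divides some $1-t^d$, which yields that the associated sequence is ultimately periodic with period dividing $d$.

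The main obstacle lies entirely in this last family: unlike $(PF)$ and $(PA)$, it is not covered by any existing structural theorem, so one must actually produce the rational functions and inspect their poles. The delicate point is that rationality by itself does \emph{not} force periodicity—a rational generating function may have poles off the unit circle, giving exponential growth, or repeated poles at roots of unity, giving polynomial growth—so the entire content of this case is the factorization of the two denominators into distinct cyclotomic polynomials. A cleaner, computation-free alternative, should one wish to avoid relying on the automaton implementation, would be to exhibit directly a length-preserving periodic structure on the FC elements of $Z_3$ and $Z_6$ via the heap description of Proposition~\ref{prop:bij_fc_elements_heaps}, thereby making the period manifest.
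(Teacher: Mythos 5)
Your proposal is correct and follows essentially the same route as the paper: $(PF)$ by eventual vanishing of the sequence, $(PA)$ by direct appeal to~\cite{BJN}, and $(PE)$ by running the implementation of Theorem~\ref{th:rationality} (Remark~\ref{rem:implementation}) and checking that the computed denominator of $\Wfc(t)$ forces periodicity --- the paper records exactly this, with denominators $1-q^5$ for $Z_3$ and $1-q$ for $Z_6$, matching the cyclotomic-divisibility criterion you state. The ``computation-free alternative'' you mention at the end (making the period manifest on the heaps themselves) is also the paper's first, merely sketched, way of handling $Z_3$ and $Z_6$, by analogy with the treatment of $\widetilde{G}_2$ and the exceptional affine types in~\cite{BJN}.
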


\begin{proof} This is obvious for elements of (PF), since the sequence is ultimately constant equal to $0$. The fact that all elements of (PA) are FC-periodic is the main results of \cite{BJN}. 

For the two remaining types $Z_3$ and $Z_6$ of (PE), there are two ways to show FC-periodicity. The first one, which we only sketch, is to do as for exceptional affine Coxeter groups in~\cite[Section 5.2]{BJN}: one must examine the structure of the corresponding FC heaps closely, and observe that a certain pattern repeats itself periodically when the size is large enough.  The case of $Z_3$ can be treated as $\widetilde{G}_2$ in~\cite{BJN}, while the case of ${Z}_6$ is slightly more involved.

     The second way is to take advantage of the results of Section~\ref{sec:rationality}, which allows us to exhibit the rational generating function $\Wfc(t)$, see Remark~\ref{rem:implementation}. For $Z_3$, the generating function has the form $P(q)/(1-q^5)$ and so we get a period $5$ when the length is large enough. In the case of $Z_6$, one gets a denominator reduced to $(1-q)$: the growth function is ultimately constant, and is in fact equal to $108$ for any length $l>20$. 
\end{proof}
   

 It remains to prove that for any type not in $\mathcal{P}$, the growth sequence increases exponentially fast. To do this we first obtain a technical result which will allow us to focus on certain minimal types.

Given two Coxeter graphs $G,H$, write $G\leq H$ if $G$ is isomorphic (as an unlabelled graph) to a subgraph $K$ of $H$ (not necessarily induced) such that the edge labels of $G$ are less than or equal to those of $K$.

\begin{lemma}
\label{lemma:monotony}
Let $(W_1,S_1),(W_2,S_2)$ be irreducible Coxeter groups with graphs $\G_1,\G_2$ satisfying $\G_1\leq \G_2$. Then $|\Wfc_{1,l}|\leq |\Wfc_{2,l}|$ for all $l\geq 0$.
\end{lemma}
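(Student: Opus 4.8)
The plan is to prove the inequality by constructing an explicit injection from $\Wfc_{1,l}$ into $\Wfc_{2,l}$ that preserves length. By hypothesis $\G_1 \leq \G_2$, so there is an injective map $\phi: S_1 \hookrightarrow S_2$ on vertex sets such that whenever $\{s,t\}$ is an edge of $\G_1$, the pair $\{\phi(s),\phi(t)\}$ is an edge of $\G_2$ with $m_{\phi(s)\phi(t)} \geq m_{st}$ (here $m=2$ encodes a non-edge, and the label condition weakens to: if $s,t$ commute in $W_1$ then they need not commute in $W_2$, but the reverse forced implications are what matter). The idea is to push a reduced expression of a FC element of $W_1$ forward letter-by-letter through $\phi$ and argue the result is still a reduced FC expression in $W_2$, so that we get a well-defined length-preserving map $\Wfc_1 \to \Wfc_2$.

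First I would work at the level of heaps rather than elements, using Proposition~\ref{prop:bij_fc_elements_heaps}. Given a FC element $w \in W_1$ with heap $H = \H_{\G_1}(w)$, I relabel each vertex $h$ of $H$ by $\phi(\e(h))$ to obtain a labelled poset $\phi_*(H)$. The key claim is that $\phi_*(H)$ is a valid $\G_2$-heap and moreover an FC heap, with the same underlying poset and hence the same cardinality $|H| = \ell(w)$. I would verify the two heap axioms: because $\phi$ sends $\G_1$-edges to $\G_2$-edges, any two elements with labels adjacent in $\G_1$ remain comparable, so condition (a) (the chain condition on $H_{s,t}$) is preserved; and since $\phi$ is injective on vertices, distinct labels stay distinct and no unwanted chains are created. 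The subtle direction is that $\phi_*(H)$ might a priori need \emph{more} covering relations than $H$ (if $\phi(s),\phi(t)$ are adjacent in $\G_2$ while $s,t$ were not adjacent in $\G_1$). This is where the monotonicity of the subgraph relation is used carefully: I would define the image poset to be the heap freshly generated by the sequence of labels $\phi(s_1)\cdots\phi(s_l)$ read off a fixed linear extension of $H$, rather than literally transporting the order. Then the image is manifestly a genuine $\G_2$-heap by construction.

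With the construction fixed as "read a reduced word, apply $\phi$ letterwise, take the $\G_2$-heap," the two things to check are: (1) the resulting $\G_2$-heap is FC, i.e.\ satisfies (h1) and (h2); and (2) the assignment $w \mapsto$ (image FC element) is injective and length-preserving. For (h1): a covering $i \prec j$ in $\H_{\G_2}(\phi(\w))$ with equal labels $\phi(s)=\phi(s)$ forces $s_i = s_j = s$ with $i,j$ adjacent via a $\G_2$-chain of $s$'s, which by injectivity of $\phi$ pulls back to a violation of (h1) in $H$, a contradiction. For (h2): a convex alternating chain of length $m_{\phi(s)\phi(t)}$ in the image, since $m_{\phi(s)\phi(t)} \geq m_{st}$, would contain a convex alternating chain of length $m_{st}$ pulling back to a (h2)-violation in $H$; here I must check that convexity is preserved under the pullback, using that $\phi_*$ does not identify labels and that the added $\G_2$-relations only \emph{coarsen} the order, so convex subsets of the image restrict to convex subsets downstairs. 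Length preservation is immediate since the number of heap elements equals the word length and $\phi$ is applied letterwise.

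The main obstacle I anticipate is precisely the interaction between the two directions of the edge/label comparison in the definition of $\G_1 \leq \G_2$: extra adjacencies in $\G_2$ can \emph{add} order relations (threatening to turn a previously-allowed pattern into a forbidden braid chain), while larger labels $m_{\phi(s)\phi(t)} \geq m_{st}$ make the braid condition (h2) \emph{harder} to violate (which is favorable). I would handle this by being scrupulous that adding order relations can only destroy convex alternating chains, never create new short ones of the critical length, and that the injectivity of $\phi$ prevents spurious label collisions; the formal heart is the lemma that $\phi_*$ maps FC heaps to FC heaps, from which injectivity (via the inverse "forget the $\phi$-image labels" on the image poset) and the length identity give $|\Wfc_{1,l}| \leq |\Wfc_{2,l}|$ directly.
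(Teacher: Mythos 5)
Your strategy (transport heaps along $\phi$ and verify (h1)--(h2) upstairs) can be made to work, but as written it has a genuine gap, located exactly at the point you flag as ``the main obstacle'' and then dispatch with an unproven principle. The order $\G_1\leq\G_2$ allows the subgraph to be \emph{non-induced}: there may be $s,t\in S_1$ with $m^{(1)}_{st}=2$ (a non-edge of $\G_1$) while $\{\phi(s),\phi(t)\}$ is an edge of $\G_2$ with label $m^{(2)}\geq 3$. For such a pair your (h2) verification fails twice over. First, a convex alternating $\phi(s)\phi(t)$-chain in the image heap does \emph{not} pull back to a chain of $H$: consecutive elements carry the labels $s$ and $t$, which are non-adjacent in $\G_1$, so the corresponding elements of $H$ need not be $\prec_1$-comparable at all --- their comparability upstairs may be created purely by the new edge. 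Second, even if they were comparable, condition (h2) for $H$ imposes nothing on the pair $(s,t)$ when $m^{(1)}_{st}=2$: in a $\G_1$-heap, a covering relation between distinct non-adjacent labels cannot occur in the first place, so there is no ``(h2)-violation in $H$'' to land on. Your guiding claim that ``adding order relations can only destroy convex alternating chains, never create new short ones of the critical length'' is precisely the assertion requiring proof, and it is false as a general statement about posets; something specific to FC heaps must be invoked.

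What actually kills this case is (h1), not (h2) --- equivalently, reducedness in $W_1$. If the image heap had a convex chain $i_1\prec_2 i_2\prec_2 i_3$ with labels $\phi(s),\phi(t),\phi(s)$, then downstairs $i_1\prec_1 i_3$ (equal labels $s$, and $i_1$ precedes $i_3$ as word positions); any element $\prec_1$-between them is also $\prec_2$-between them (since $\prec_1\subseteq\prec_2$), hence equals $i_2$ by convexity; but $i_1\prec_1 i_2$ is impossible because the labels $s,t$ are distinct and non-adjacent in $\G_1$ and no element lies $\prec_2$-between $i_1$ and $i_2$. So $i_1\prec_1 i_3$ is a covering relation with equal labels, violating (h1) for $H$. (At the level of words: some reduced word of $w$ in $W_1$ would contain the factor $sts=s^2t$, contradicting reducedness.) Your other case, $3\leq m^{(1)}_{st}\leq m^{(2)}$, does go through as you describe, once one notes that consecutive chain elements, having $\G_1$-adjacent labels and being position-ordered, are automatically $\prec_1$-comparable, and that convexity transfers because $\prec_1\subseteq\prec_2$. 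For comparison, the paper's own proof avoids heaps entirely: it observes that $\R(w)$, a $W_1$-commutation class, is a union of $W_2$-commutation classes (every non-edge of $\G_2$ pulls back to a non-edge of $\G_1$), picks one such class $\C'$, checks that no word in it contains a square or a $W_2$-braid factor --- the non-induced-edge case being dismissed in one line exactly by the reducedness argument above --- and concludes via Lemma~\ref{lem:FCcommclasses}, with injectivity immediate from disjointness of the classes $\R(w)$. You should also tighten your injectivity step: ``forgetting the $\phi$-image labels'' is not an inverse; the correct observation is that every $\G_2$-commutation applied to $\phi(\w)$ pulls back to a $\G_1$-commutation, so equal images force $\w\sim_{\G_1}\w'$.
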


\begin{proof} 
By hypothesis, we have an injective morphism from $\G_1$ to $\G_2$ so we can safely assume that $S_1\subseteq S_2$ and that the morphism is the identity on $\G_1$. Notice that $\G_1\leq \G_2$ then means precisely that $m^{(1)}_{st}\leq m^{(2)}_{st}$ for all $s,t\in S_1$.

Let $w$ be any element in $\Wfc_{1,l}$. Its set of reduced words $\C:=\R(w)$ is a $W_1$-commutation class of words on $S_1$ of length $l$. By hypothesis $m^{(2)}_{st}=2$ implies $m^{(1)}_{st}=2$ for any $s,t\in S_1$, which shows that $\C$ is a union of $W_2$-commutation classes.

Pick one such $W_2$-commutation class $\C'\subseteq \C$, and consider $\w\in \C'$. Then $\w$ has no squares $s^2$, since $\w\in \C$ and thus is a reduced word for $(W_1,S_1)$. Now consider $s,t\in S_1$ such that $3\leq m^{(2)}_{st}<+\infty$. If $3\leq m^{(1)}_{st}\leq m^{(2)}_{st}$, then no braid word $\braidr{sts}{m^{(1)}_{st}}$ occurs in $\w$ and so \emph{a fortiori} no braid word $\braidr{sts}{m^{(2)}_{st}}$ occurs either. If $m^{(1)}_{st}=2$, then no braid factor $\braidr{sts}{m^{(2)}_{st}}$ can occur in $\w$ since it would then fail to be reduced for $W_1$. 

We showed that $\C'$ is a FC commutation class for $W_2$, and so corresponds to an element in $w'\in \Wfc_{2,l}$ by Lemma~\ref{lem:FCcommclasses}. The map $w\mapsto w'$ is therefore an injection from $\Wfc_{1,l}$ to $\Wfc_{2,l}$.\end{proof}


\subsection{Proof of Theorem~\ref{th:classification}}
Let $G$ be a Coxeter graph which is not in any of the families (PF), (PA) or (PE). We will first determine all such graphs which are minimal for the order $\leq$ introduced above. Denote by $\mathcal{M}$ the family of Coxeter graphs depicted in Figure~\ref{fig:minimal_graphs}.

\begin{figure}[!ht]
\includegraphics[width=0.9\textwidth]{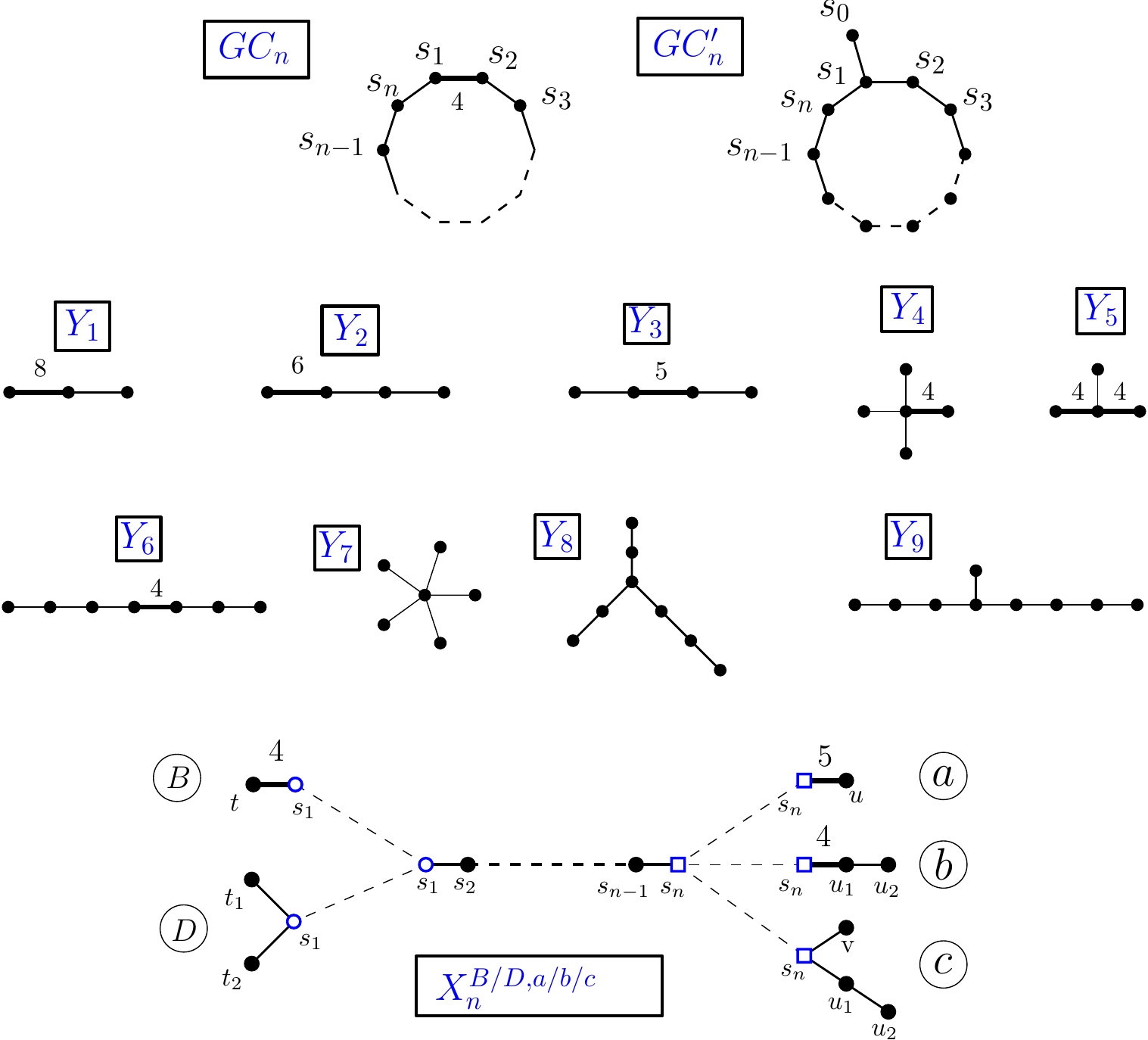}
\caption{\label{fig:minimal_graphs} The family $\mathcal{M}$ of Coxeter graphs. Types $GC_n$ and $GC'_n$ are defined for $n\geq 3$, and types $X_n^{*,*}$ for $n\geq 1$. 
}
\end{figure}

\begin{lemma}
\label{lemma:mingraphs}
If $G$ is a connected Coxeter graph not in $\mathcal{P}$, then there exists $G'\in \mathcal{M}$ such that $G'\preceq G$.
\end{lemma}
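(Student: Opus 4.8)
The plan is to reformulate the statement through minimal obstructions. First I would record that $\le$ is a partial order on isomorphism classes of Coxeter graphs for which every lower set is finite: a graph $H\le G$ has no more vertices and no more edges than $G$, and each of its labels is bounded by the corresponding label of $G$, so only finitely many $H$ (up to isomorphism) satisfy $H\le G$. Consequently, for any connected $G\notin\mathcal{P}$ the nonempty finite set of connected graphs $H\le G$ with $H\notin\mathcal{P}$ admits a $\le$-minimal element $G'$; and if some connected $H'\notin\mathcal{P}$ satisfied $H'<G'$ then $H'<G'\le G$ would place $H'$ in that same set below $G'$, a contradiction, so $G'$ is minimal among \emph{all} connected graphs outside $\mathcal{P}$. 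It therefore suffices to prove that every $\le$-minimal connected graph outside $\mathcal{P}$ lies in $\mathcal{M}$, and the lemma follows by choosing such a $G'\le G$.

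To carry this out I would use the explicit description of $\mathcal{P}$ coming from Stembridge's classification of the FC-finite diagrams $(PF)$, the standard list of affine diagrams $(PA)$, and the two exceptional diagrams $Z_3,Z_6$: together these present $\mathcal{P}$ as an explicit list of labelled trees plus the cyclic affine diagrams $\widetilde{A}_n$. A minimal $G\notin\mathcal{P}$ is then characterised by the property that deleting any leaf, deleting any non-bridge edge, or lowering any single label by one returns a graph in $\mathcal{P}$. I would first split on whether $G$ contains a cycle. Since the only cyclic diagrams in $\mathcal{P}$ are the unlabelled cycles $\widetilde{A}_n$, a minimal cyclic $G$ is a cycle carrying the least extra structure that prevents it from being some $\widetilde{A}_n$ — a single label $\ge 4$, a chord, or a pendant edge — and I would check that the minimal such configurations are exactly the graphs $GC_n,GC'_n$. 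Here a genuine family indexed by $n$ appears because cycles of different lengths are $\le$-incomparable (a subgraph cannot create a shorter cycle), so each cycle length forces its own minimal obstruction.

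The remaining and most substantial case is that $G$ is a tree, which is where the families $X_n^{*,*}$ arise. I would organise the analysis around the three quantities that separate the tree-shaped members of $\mathcal{P}$: the labels $\ge 4$ together with their positions, the number and relative placement of branch vertices (vertices of degree $\ge 3$), and the lengths of the branches issuing from them. A tree leaves $\mathcal{P}$ exactly when one of these is too large — two high labels, a high label in a forbidden interior position, a vertex of degree $\ge 4$, two branch vertices too far apart, or a branch longer than the pertinent $E$-, $\widetilde{E}$- or $Z$-type permits. For each such violation I would exhibit its minimal witness: either a single small tree, or, when the violation consists of two features anchored at the two ends of a path, a one-parameter family, since two different path-lengths are again $\le$-incomparable once both end-features are pinned. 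The decorations $*,*$ record the finitely many admissible label choices at the two ends. In each sub-case I would confirm minimality by checking that lowering any label or deleting any leaf or edge lands back in $\mathcal{P}$, and conversely argue that a tree avoiding all $X_n^{*,*}$ is forced into one of the listed $\mathcal{P}$ shapes.

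The main obstacle is the exhaustiveness of this tree analysis: one must be sure that $\{GC_n,GC'_n,X_n^{*,*}\}$ really captures every way of leaving $\mathcal{P}$, with no sporadic minimal obstruction overlooked, and this is most delicate at the boundaries where the FC-finite, affine and exceptional families abut — for instance distinguishing $E_n$ from $\widetilde{E}_n$ from the $Z$-types, and $F_4$ from $\widetilde{F}_4$ from longer $F$-type diagrams. Getting the endpoints of each parametrised family right (the smallest admissible $n$ and the exact set of admissible end-decorations) is where the bookkeeping is heaviest; once this is pinned down, the final step is the routine finite verification that each graph listed in $\mathcal{M}$ is indeed outside $\mathcal{P}$ and $\le$-minimal, completing the classification and hence the lemma.
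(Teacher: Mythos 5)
Your proposal is a plan for a proof rather than a proof: every step that carries the actual mathematical content of the lemma is deferred (``I would check that the minimal such configurations are exactly\dots'', ``For each such violation I would exhibit its minimal witness'', ``I would confirm minimality\dots''), and you yourself flag the exhaustiveness of the tree analysis as ``the main obstacle'' without resolving it. That exhaustive analysis \emph{is} the lemma: the paper's proof consists precisely of it, split into four cases ($G$ contains a cycle; $G$ is a tree with an edge label $\geq 5$; a tree with maximal label $4$; a simply-laced tree), where in each case the hypothesis that $G$ avoids the specific members of $\mathcal{P}$ ($\widetilde{A}_n$, $I_2(m)$, $H_n$, $Z_3$, $B_n$, $F_n$, $Z_6$, $\widetilde{B}_n$, $\widetilde{C}_n$, $A_n$, $\widetilde{D}_n$, $E_n$, $\widetilde{E}_6$, $\widetilde{E}_7$) is used to embed an explicit member of $\mathcal{M}$. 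Moreover, the obstruction set you work with, $\{GC_n, GC'_n, X_n^{*,*}\}$, omits the nine sporadic graphs $Y_1,\ldots,Y_9$ that belong to $\mathcal{M}$ and are indispensable: for instance, a simply-laced tree with a single degree-$3$ branch point that is not $E_n$, $\widetilde{E}_6$ or $\widetilde{E}_7$ contains $Y_8$ or $Y_9$, and a tree with an edge label $\geq 6$ in a suitable position contains $Y_2$. So even completed along the lines you sketch, the classification could not close with the list you state.

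There are also problems with the reduction itself. Your finiteness claim for lower sets is false when $G$ carries an edge labelled $\infty$: every finite label $m\geq 3$ gives a graph below that edge, so the lower set is infinite. What the argument actually needs is well-foundedness (no infinite strictly descending chains), which does hold because labels lie in the well-ordered set $\{3,4,\ldots\}\cup\{\infty\}$ and vertex and edge counts are bounded; the conclusion survives, but not by your argument. More importantly, the minimal-obstruction reformulation buys nothing here and costs extra work: the paper never needs $\mathcal{M}$ to consist of $\leq$-minimal graphs, nor does it need to characterize all minimal graphs outside $\mathcal{P}$ --- it only needs \emph{some} member of $\mathcal{M}$ below $G$ (Lemma~\ref{lemma:monotony} and Lemma~\ref{lemma:unboundedgraphs} then finish Theorem~\ref{th:classification}). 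Your route obliges you to prove an exact characterization of minimal obstructions, including all the minimality verifications, which is strictly more than is required. One of your structural claims along the way is also off: a cycle with a chord is not a minimal configuration, since it strictly dominates a smaller cycle with a pendant edge, i.e.\ some $GC_a$.
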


\begin{proof}
{\em Case $1$: $G$ contains a cycle}. Pick a minimal cycle in $G$. Since $G\neq\widetilde{A}_n$, one has $GC_n\preceq G$ or $GC'_n\preceq G$ for a certain $n\geq 3$, and we are done in this case.
\smallskip

From now on, we assume that $G$ has no cycle, i.e. $G$ is a tree.
\smallskip

{\em Case $2$: $G$ contains an edge label $m\geq 5$}. Let $e=(u,v)$ be an edge with label $m$. Since $G\neq I_2(m)$, $G$ has at least three vertices.  If $G$ has exactly three vertices, then since $G\neq Z_3,H_3$, we have $X_1^{B,e}\preceq G$ or $Y_1\preceq G$ . If neither $u$ nor $v$ is a leaf, then $Y_3\preceq G$. If $u$ or $v$ has degree $\geq 3$, then $X_1^{D,a}\preceq G$. 

There remains the case where $G$ has vertices $x,y$ with edges $(v,x)$ and $(x,y)$. If $m\geq 6$, then $Y_2\preceq G$. If $m=5$, $G\neq H_n$ implies $X_n^{B,a}\preceq G$ or $X_n^{D,a} \preceq G$ for a certain $n$.
\smallskip

{\em Case $3$: $G$ has maximal edge label $4$.} Assume first that the label $4$ occurs at least twice, and pick two edges labeled $4$ with simple edges on the path linking them. Since $G\neq \widetilde{C}_n$, there is an edge in $G$ not on this path; depending on the position of this edge one gets $X_n^{B,i}\preceq G$ for $i=b$ or $c$, $Y_4\preceq G$.

Now suppose $G$ has exactly one edge $e=(u,v)$ labeled $4$.  If $G$ has no branch point, then since $G\neq B_n, F_n, Z_6$, one has necessarily $Y_6\preceq G$. If $G$ has a branch point of degree $d\geq 4$, one has $Y_4\preceq G$ or $X_n^{D,c}\preceq G$ for a certain $n\geq 1$; if the branch point has degree $3$, then since $G\neq\widetilde{B}_n$ one has $X_n^{B,c}\preceq G$ or $X_n^{D,b}\preceq G$  for a certain $n\geq 1$.
\smallskip

{\em Case $4$: $G$ has all edge labels equal to $3$.} Since $G\neq A_n$, $G$ has necessarily at least one branch point, \emph{i.\,e.} a vertex of degree larger than $2$. If $G$ has two branch points or more, then since $G\neq \widetilde{D}_n$ one has $X_n^{D,c}\preceq G$ for an $n\geq 1$. 

  We may now assume $G$ has exactly one branch point, say of degree $d$. If  $d\geq 5$, then $Y_7\preceq G$. If  $d=4$, then since $G\neq \widetilde{D}_4$ one has $X_1^{D,c}\preceq G$. If $d=3$, then since $G\neq E_n,\widetilde{E}_6,\widetilde{E}_7$ one has necessarily $Y_8\preceq G$ or $Y_9\preceq G$.

This completes the proof.\end{proof}

\begin{lemma}
\label{lemma:unboundedgraphs}
 For each Coxeter group $W$ with type in $\mathcal{M}$, the sequence $(|\Wfc_{l}|)_{l\geq 0}$ grows exponentially.
\end{lemma}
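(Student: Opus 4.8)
The plan is to prove exponential growth for each type in $\mathcal{M}$ by exhibiting, for each Coxeter graph in the family, an explicit infinite supply of FC elements whose number at length $l$ grows like $c^l$ for some $c>1$. The key structural observation is that exponential growth of $(|\Wfc_l|)_l$ is equivalent to the existence of a subfamily of FC heaps that can be freely concatenated: if I can find two (or more) distinct ``gadget'' heaps $A$ and $B$, each of bounded size, such that any word in $\{A,B\}^*$ obtained by stacking copies end-to-end yields a valid FC heap, and such that distinct words give distinct heaps, then the number of FC elements of length $\le l$ is at least $2^{\lfloor l/c\rfloor}$ for some constant $c$, giving exponential growth. So the first step is to reduce the lemma to producing, for each graph $G'\in\mathcal{M}$, such a pair of freely-stackable gadgets living in $\Wfc$ for $W(G')$.

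The second step is the case analysis over $\mathcal{M}$. Rather than treat every graph separately, I would exploit the monotonicity Lemma~\ref{lemma:monotony}: since $\leq$ is compatible with FC-length counting, it suffices to prove exponential growth for the \emph{minimal} members and, where convenient, to group the families $GC_n, GC'_n$ and the various $X_n^{*,*}$ by a uniform construction. For the cyclic types $GC_n,GC'_n$ (containing a cycle that is not of affine type $\widetilde{A}$), the relevant gadget uses the fact that one can travel around the cycle in FC heaps in more than one essentially distinct way, producing a binary choice at each traversal. For the tree types $X_n^{*,*}$ and the exceptional $Y_i$, the gadgets are local patterns near a branch point or near a high-label edge; here the idea is that a branch point of degree $\ge 3$, or a sufficiently high edge label, gives enough ``room'' that one can insert or omit a sub-heap freely without ever creating a forbidden convex alternating chain (condition (h2)) or a repeated-label cover (condition (h1)). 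In each case I would verify (h1) and (h2) directly via Proposition~\ref{prop:bij_fc_elements_heaps}, checking that the stacking of gadgets never produces an alternating chain of critical length $m_{st}$ on any edge.

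As an alternative to hand-built gadgets, and in order to make the argument fully rigorous and verifiable, I would fall back where needed on Theorem~\ref{th:rationality} together with its implementation (Remark~\ref{rem:implementation}): for the finitely many ``base'' minimal graphs (the $Y_i$ and the $n=1$ or $n=3$ instances of the $X$ and $GC$ families), one computes the explicit rational function $\Wfc(t)$ and checks that its denominator has a root strictly inside the unit disk, which by standard transfer theorems forces $|\Wfc_l|$ to grow like $\rho^{-l}$ with $\rho<1$ the smallest modulus of a pole. Monotonicity then propagates exponential growth from each base case up the family. This hybrid strategy (explicit gadgets for the infinite families, machine-verified rational functions for the finite list of exceptional seeds) covers all of $\mathcal{M}$.

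The main obstacle I anticipate is verifying the free-stackability of the gadgets, i.e.\ showing that condition (h2) is never violated when concatenating arbitrarily many copies. The danger is a long-range interaction: an alternating $st$-chain can be assembled from pieces contributed by several consecutive gadgets, so checking that each gadget is internally FC is not enough—one must control the maximal alternating chains \emph{across} gadget boundaries. The cleanest way to handle this is to design each gadget so that it contains a ``barrier'' element that makes the interface convex-closed, guaranteeing that no forbidden chain can span two gadgets; establishing that such a barrier exists in every type of $\mathcal{M}$, and in particular for the high-label edges where $m_{st}$ can be large, is the delicate point. Where a uniform barrier argument is awkward, I would defer to the computed $\Wfc(t)$ rather than force the combinatorial argument through.
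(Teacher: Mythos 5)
Your overall architecture --- explicit exponential families for the infinite types $GC_n$, $GC'_n$, $X_n^{*,*}$ together with machine-computed rational functions $\Wfc(t)$ for the exceptional graphs $Y_1,\dots,Y_9$ --- is exactly the paper's strategy, and the direction of your gadget reduction that you actually use (freely stackable pieces imply exponential growth) is sound. But there is a genuine gap in the part of your plan that is supposed to make the infinite families rigorous. You propose, as a backstop, to compute $\Wfc(t)$ for the ``base'' members ($n=1$ or $n=3$) of the families $X_n^{*,*}$, $GC_n$, $GC'_n$ and then ``propagate exponential growth up the family'' by Lemma~\ref{lemma:monotony}. This propagation is impossible: the members of each infinite family are pairwise incomparable for the order $\leq$. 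For instance $GC_3$ (a triangle) is not a subgraph of the $n$-cycle underlying $GC_n$ for $n>3$; and for the tree types, the two distinguished features of $X_n^{*,*}$ (the labelled edge and/or the fork) are separated by a path of $n$ simple edges, and since the path joining two vertices inside a subgraph of a tree is the unique tree path, an embedding respecting these features cannot change $n$. Indeed, if such comparisons held, $\mathcal{M}$ could have been taken finite, and the whole point of Lemma~\ref{lemma:mingraphs} is that it cannot. So for the infinite families there is no machine fallback and no monotonicity shortcut: the combinatorial construction must be carried out uniformly in $n$, and that is precisely the step you flag as ``delicate'' and leave open (controlling condition (h2) across gadget boundaries).

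For comparison, here is how the paper closes that step. In type $GC_n$ it takes $\w=s_3s_4\cdots s_ns_1s_2$ and $\w'=s_ns_{n-1}\cdots s_2s_1$ and concatenates arbitrary sequences from $\{\w,\w'\}$ with separators forced by consecutive pairs; the observation that makes the verification immediate is that the resulting words admit \emph{no} commutations of adjacent letters at all and contain no braid factor, so by Lemma~\ref{lem:FCcommclasses} each such word is a singleton commutation class, hence a reduced word of a distinct FC element --- this sidesteps your cross-boundary convexity problem entirely, since there is nothing to check about chains assembled from several gadgets. Type $GC'_n$ is handled the same way with $\w=s_1\cdots s_ns_1\cdots s_n$ and optional separators $s_0$. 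For the types $X_n^{*,*}$ the paper does not stack gadgets at all: it starts from the zigzag FC heaps of affine type $\widetilde{C}$, whose structure is known from the classification in~\cite{BJN}, and counts the exponentially many ways to insert extra elements (subject to simple spacing rules in the relevant chains $H_{x_n,u}$, $H_{u_1,u_2}$) while preserving (h1) and (h2). To repair your proposal you should either adopt such concrete constructions or replace the barrier heuristic by an actual verification valid for all $n$; in any case the monotonicity fallback for the infinite families must be deleted, while your treatment of the $Y_i$ by explicit computation of $\Wfc(t)$ and its poles is fine and coincides with the paper's.
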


\begin{proof}
Let us first deal with the exceptional cases $Y_i$, $i=1,\ldots,9$. It is possible in each case to prove the non-boundedness directly, by coming up with specific FC elements or heaps. For instance, in the case of $Y_2$, $w_1=s_1s_2s_1s_3s_2$ and $w_2=s_1s_2s_1s_3s_2s_1s_4s_3s_2$ are FC elements, and one checks that all possible products of $w_1$ and $w_2$ are reduced and form distinct elements of $\Wfc$.

We rather proceed as in Proposition~\ref{prop:periodic_families}, by using the results of Section~\ref{sec:rationality} and our implementation mentioned in Remark~\ref{rem:implementation}. This gives us the rational generating functions $Y_i^{FC}(t)$, whose denominators we recorded in Figure~\ref{tb:minina}. In each case, there is a positive root smaller than $1$, which forces an exponential growth for $Y_i^{FC}$.
\begin{figure}[!ht]
\[\begin{array}{|c|c|c|} 
   \hline
     \text{Type} & \text{Denominator of } \Wfc(t) & \text{Smallest positive root}   \\
    \hline
    Y_1 & 1-q^5-q^7 & 0.8898912458 \\
    Y_2  & 1-q^5 - q^9 - q^{11} & 0.8699890716 \\
    Y_3  & 1- q^4 - q^6 & 0.8688369618 \\
    Y_4  & 1- 2q^3- 3q^5- 2q^6 -q^7 + 6q^8 + 2q^9+ 4q^{12}+ q^{13} & 0.6950969040 \\
    Y_5  & 1- q^2 - q^3 - 2q^6 - q^7 + 3q^8 + q^9 + q^{12} & 0.9225155924 \\
    Y_6  & 1-q^{12}-q^{17}-q^{25} & 0.9378483025 \\
    Y_7  & 1- 3q^3 -q^7  & 0.6778224161 \\
    Y_8  & 1-q^{12}-q^{17}-q^{25} & 0.9378483025 \\
    Y_9  & 1-q^{18}-q^{37}& 0.9740122556 \\
    \hline
\end{array}
\]
\caption{For each group $W$ of type $Y_i$ illustrated in Figure~\ref{fig:minimal_graphs}, we compute the denominator of the rational function \label{tb:minina}}
\end{figure}
\smallskip

 In type $GC_n$, define the words $\w=s_3s_4\cdots s_ns_1s_2$ and $\w'=s_ns_{n-1}\cdots s_2s_1$. Now consider the set $E$ of words of the form  $u_1\epsilon_1 u_2\ldots\epsilon_{k-1}u_k$, where  $u_i\in\{\w,\w'\}$, $\epsilon_i=\epsilon$ if $u_i=u_{i+1}$, $\epsilon_i=s_1$ if $u_iu_{i+1}=\w\w'$ while $\epsilon_i=s_2$ if $u_iu_{i+1}=\w'\w$. Notice that these words in $E$ have no adjacent commuting letters, and no braid factor. $E$ can be naturally considered as a subset of $\Wfc$ which grows exponentially, and this proves the claim in type $GC_n$. 
 
 In type $GC'_n$, let $\w$ be the word $s_1s_2\cdots s_ns_1s_2\cdots s_n$. Now consider the set $E'$ of words $\w\epsilon_1\w\ldots\epsilon_{k-1}\w$, where $\epsilon_i$ is either $\epsilon$ or $s_0$. These are all reduced words for distinct FC elements (notice that the only possible commutations involve the letters $s_0$). So $E'$ clearly corresponds to a family in $\Wfc$ with exponential growth.
\smallskip

 Finally, one notices that types $X_n^{*,*}$ resemble types $\widetilde{B},\widetilde{C},\widetilde{D}$. In~\cite{BJN}, it was shown that these were FC-finite, and in fact the corresponding FC heaps were entirely classified. In particular there were two infinite families,  alternating and \emph{zigzag} heaps. Zigzag heaps are given by words occurring as factors of $(tx_1x_2\cdots x_nux_n\cdots x_1)^\infty$ in type $\widetilde{C}$.
 
  We now fix a zigzag element $w$ of type $\widetilde{C}$ with heap $H$. We describe how to obtain a family of FC elements in type $X_n^{B,i}$ for $i=a,b$ and $c$:

\begin{enumerate}[label=(\alph*)]
\item Insert $u$-elements such that, in the word $H_{x_n,u}$, these occurrences are always separated by at least one $x_n$.
\item Insert $u_2$-elements so that in the word $H_{u_1,u_2}$, these occurrences are always separated by at least two $u_1$'s.
\item Change all occurrences of $u$ by $u_1v$ to get a word with heap $H'$. Then insert $u_2$-elements so that in the word $H'_{u_1,u_2}$, these occurrences are always separated by at least two $u_1$.
\end{enumerate} 

In each case, one verifies that the heaps created are indeed FC heaps. Also, when the initial zigzag element increases in length, more and more heaps are created. This shows that in each type the heaps created determine a family that grows exponentially with length. The case $X_n^{D,i}$ are dealt with in the same manner by replacing occurrences of $t$ by $t_1t_2$ in the resulting heaps.\end{proof}


We can now conclude the proof of Theorem~\ref{th:classification}. By Proposition~\ref{prop:periodic_families},  all graphs in $\mathcal{P}$ are FC-periodic. Let $\G$ be a Coxeter graph which is not in $\mathcal{P}$. By Lemma~\ref{lemma:mingraphs}, there exists $\G'\in \mathcal{M}$ with $\G'\preceq \G$. The counting sequence of FC elements of $\G'$ is exponential by Lemma~\ref{lemma:unboundedgraphs}, so by Lemma~\ref{lemma:monotony} the counting sequence of FC elements of $\G$ also is, which completes the proof.

\section{Further work}
\label{sec:perspectives}
  We now give some possible continuations for this work; some of them were mentioned in the course of the text.
\medskip

There are several questions of interest regarding our automaton $\Ared(W)$ recognizing $\R(\Wfc)$. For various families of Coxeter groups, for instance finite and affine types, one could look at the combinatorics of the states of $\Ared(W)$, by studying their structure, enumerate them, etc. On a related note, and as mentioned in Remark~\ref{rem:minimality}, it would be  interesting to study questions of minimality. 

Also, $\Ared(W)$  is defined in an \emph{ad hoc} manner requiring in particular a technical disjunction of cases. In Remark~\ref{rem:canaut}, we raised the question of designing another automaton recognizing $\R(\Wfc)$ based on the pre-existing  automaton $\A_{sm}(W)$ recognizing all $\R(W)$.
 \medskip

As for the classification result given in Theorem~\ref{th:classification_intro}, it would be interesting to find a uniform explanation for the periodic behaviour; this question was already asked in~\cite{BJN} for affine types. When $\Wfc$ grows exponentially on the other hand, it is natural to wonder if there is a ``gap'' in the exponential rates of growth: if $\alpha_W$ is defined by $\liminf_l \log(|\Wfc_l|)/l$,  are the $\alpha_W$ bounded away from $1$ when $W$ runs through non FC-periodic irreducible Coxeter groups ? Note that, as a byproduct of our proof, it is enough to study this question for the groups in $\mathcal{M}$, see Figure~\ref{fig:minimal_graphs}. 

\bibliographystyle{plain}

\begin{thebibliography}{10}

\bibitem{AnisimovKnuth}
A.~V. An{\={\i}}s{\={\i}}mov and D.~E. Knuth.
\newblock Inhomogeneous sorting.
\newblock {\em Internat. J. Comput. Inform. Sci.}, 8(4):255--260, 1979.

\bibitem{BenkartMeinel}
G.~{Benkart} and J.~{Meinel}.
\newblock {The center of the affine nilTemperley-Lieb algebra}.
\newblock {\em ArXiv e-prints}, May 2015.

\bibitem{BJN}
R.~Biagioli, F.~Jouhet, and P.~Nadeau.
\newblock Fully commutative elements in finite and affine {C}oxeter groups.
\newblock {\em Monatsh. Math.}, 178(1):1--37, 2015.

\bibitem{BjorBrenbook}
A.~Bj{\"o}rner and F.~Brenti.
\newblock {\em Combinatorics of {C}oxeter groups}, volume 231 of {\em Graduate
  Texts in Mathematics}.
\newblock Springer, New York, 2005.

\bibitem{BrinkHowlett}
B.~Brink and R.~B. Howlett.
\newblock A finiteness property and an automatic structure for {C}oxeter
  groups.
\newblock {\em Math. Ann.}, 296(1):179--190, 1993.

\bibitem{cassweb}
B.~Casselman.
\newblock Coxeter groups part {II}. {W}ord processing.
\newblock CRM Winter School on Coxeter groups, 2002.

\bibitem{deMan}
R.~de~Man.
\newblock The generating function for the number of roots of a {C}oxeter group.
\newblock {\em J. Symbolic Comput.}, 27(6):535--541, 1999.

\bibitem{sage}
The~Sage Developers.
\newblock {\em {S}age {M}athematics {S}oftware ({V}ersion 6.9)}, 2015.
\newblock {\tt http://www.sagemath.org}.

\bibitem{FanStem}
C.~K. Fan and J.~R. Stembridge.
\newblock Nilpotent orbits and commutative elements.
\newblock {\em J. Algebra}, 196(2):490--498, 1997.

\bibitem{Fan}
C.K. Fan.
\newblock {\em A Hecke algebra quotient and properties of commutative elements
  of a {W}eyl group}.
\newblock Phd thesis, M.I.T., 1995.

\bibitem{FominGreene}
S.~Fomin and C.~Greene.
\newblock Noncommutative {S}chur functions and their applications.
\newblock {\em Discrete Math.}, 193(1-3):179--200, 1998.
\newblock Selected papers in honor of Adriano Garsia (Taormina, 1994).

\bibitem{Graham}
J.~Graham.
\newblock {\em Modular Representations of Hecke Algebra s and Related
  Algebras}.
\newblock PhD thesis, University of Sydney, 1995.

\bibitem{GreenBook}
R.~M. Green.
\newblock {\em Combinatorics of Minuscule Representations}.
\newblock Cambridge Tracts in Mathematics. Cambridge University Press, 2013.

\bibitem{HanJon}
C.~R.~H. Hanusa and B.~C. Jones.
\newblock The enumeration of fully commutative affine permutations.
\newblock {\em European J. Combin.}, 31(5):1342--1359, 2010.

\bibitem{HNW}
C.~{Hohlweg}, P.~{Nadeau}, and N.~{Williams}.
\newblock {Automata, reduced words, and Garside shadows in Coxeter groups}.
\newblock {\em ArXiv e-prints}, October 2015.

\bibitem{Humphreys}
J.~E. Humphreys.
\newblock {\em Reflection groups and {C}oxeter groups}, volume~29 of {\em
  Cambridge Studies in Advanced Mathematics}.
\newblock Cambridge University Press, Cambridge, 1990.

\bibitem{JonesAnnals}
V.~F.~R. Jones.
\newblock Hecke algebra representations of braid groups and link polynomials.
\newblock {\em Ann. of Math. (2)}, 126(2):335--388, 1987.

\bibitem{GrowthAlgebrasBook}
G.~R. Krause and T.~H. Lenagan.
\newblock {\em Growth of algebras and {G}elfand-{K}irillov dimension},
  volume~22 of {\em Graduate Studies in Mathematics}.
\newblock American Mathematical Society, Providence, RI, revised edition, 2000.

\bibitem{Matsumoto}
H.~Matsumoto.
\newblock G\'en\'erateurs et relations des groupes de {W}eyl g\'en\'eralis\'es.
\newblock {\em C. R. Acad. Sci. Paris}, 258:3419--3422, 1964.

\bibitem{Postnikov}
A.~Postnikov.
\newblock Affine approach to quantum {S}chubert calculus.
\newblock {\em Duke Math. J.}, 128(3):473--509, 2005.

\bibitem{Saka}
Jacques Sakarovitch.
\newblock {\em Elements of automata theory}.
\newblock Cambridge University Press, Cambridge, 2009.
\newblock Translated from the 2003 French original by Reuben Thomas.

\bibitem{St1}
J.~R. Stembridge.
\newblock On the fully commutative elements of {C}oxeter groups.
\newblock {\em J. Algebraic Combin.}, 5(4):353--385, 1996.

\bibitem{St2}
J.~R. Stembridge.
\newblock Some combinatorial aspects of reduced words in finite {C}oxeter
  groups.
\newblock {\em Trans. Amer. Math. Soc.}, 349(4):1285--1332, 1997.

\bibitem{TemperleyLieb}
H.~N.~V. Temperley and E.~H. Lieb.
\newblock Relations between the ``percolation'' and ``colouring'' problem and
  other graph-theoretical problems associated with regular planar lattices:
  some exact results for the ``percolation'' problem.
\newblock {\em Proc. Roy. Soc. London Ser. A}, 322(1549):251--280, 1971.

\bibitem{Tits68}
J.~Tits.
\newblock Le probl\`eme des mots dans les groupes de {C}oxeter.
\newblock In {\em Symposia {M}athematica ({INDAM}, {R}ome, 1967/68), {V}ol. 1},
  pages 175--185. Academic Press, London, 1969.

\bibitem{Ufnarovskij}
V.~A. Ufnarovskij.
\newblock Combinatorial and asymptotic methods in algebra.
\newblock In {\em Algebra, {VI}}, volume~57 of {\em Encyclopaedia Math. Sci.},
  pages 1--196. Springer, Berlin, 1995.

\bibitem{ViennotHeaps}
G.~X. Viennot.
\newblock Heaps of pieces. {I}. {B}asic definitions and combinatorial lemmas.
\newblock In {\em Combinatoire \'enum\'erative ({M}ontreal, {Q}ue.,
  1985/{Q}uebec, {Q}ue., 1985)}, volume 1234 of {\em Lecture Notes in Math.},
  pages 321--350. Springer, Berlin, 1986.

\bibitem{Ukraine_four}
M.~V. Zavodovskii.
\newblock Growth of generalized {T}emperley-{L}ieb algebras associated with
  {C}oxeter graphs (four projectors).
\newblock {\em Dopov. Nats. Akad. Nauk Ukr. Mat. Prirodozn. Tekh. Nauki},
  (7):12--15, 2010.

\bibitem{Ukraine_simple}
M.~V. Zavodovskii and Yu.~S. Samoilenko.
\newblock Growth generalized {T}emperley-{L}ieb algebras connected with simple
  graphs.
\newblock {\em Ukrainian Math. J.}, 61(11):1858--1864, 2009.

\end{thebibliography}

\def\lfhook#1{\setbox0=\hbox{#1}{\ooalign{\hidewidth
  \lower1.5ex\hbox{'}\hidewidth\crcr\unhbox0}}}

\end{document}